\DeclareMathAlphabet{\mathcalligra}{T1}{calligra}{m}{n}
\DeclareFontShape{T1}{calligra}{m}{n}{<->s*[1.5]callig15}{}
\newtheorem{theorem}{Theorem}[section]
\newtheorem{lemma}[theorem]{Lemma}
\newtheorem{lem-def}[theorem]{Lemma-definition}
\newtheorem{proposition}[theorem]{Proposition}
\newtheorem{prop-def}[theorem]{Proposition-definition}
\newtheorem{claim}[theorem]{Claim}
\newtheorem*{conjecture}{DK Conjecture}
\theoremstyle{definition}
\newtheorem{remark}[theorem]{Remark}
\numberwithin{equation}{section}
\newtheorem{thm}{Theorem}[section] 
\theoremstyle{plain} 
\newcommand{\thistheoremname}{}
\newtheorem{genericthm}[thm]{\thistheoremname}
\newtheorem*{genericthm*}{\thistheoremname}
\newenvironment{namedthm*}[1]
  {\renewcommand{\thistheoremname}{#1}%
   \begin{genericthm*}}
  {\end{genericthm*}}
\newcommand{\CC} {\mathbb{C}}
\newcommand{\LL} {\mathbb{L}}
\newcommand{\PP} {\mathbb{P}}
\newcommand{\RR} {\mathbb{R}}
\newcommand{\ZZ} {\mathbb{Z}}
\newcommand {\shA} {\mathcal{A}}
\newcommand {\shB} {\mathcal{B}}
\newcommand {\shC} {\mathcal{C}}
\newcommand {\shD} {\mathcal{D}}
\newcommand {\shE} {\mathcal{E}}
\newcommand {\shF} {\mathcal{F}}
\newcommand {\shH} {\mathcal{H}}
\newcommand {\shO} {\mathcal{O}}
\newcommand {\shS} {\mathcal{S}}
\newcommand {\shT} {\mathcal{T}}
\newcommand {\Ext} {\operatorname{Ext}}
\newcommand{\sExt}{\mathscr{E} \kern -3pt xt}
\newcommand {\Hom} {\operatorname{Hom}}
\newcommand {\sHom}{\mathscr{H}\kern-5pt\mathcalligra{om}}
\newcommand {\Id} {\operatorname{Id}}
\newcolumntype{L}[1]{>{\raggedright\let\newline\\\arraybackslash\hspace{0pt}}m{#1}}
\newcolumntype{C}[1]{>{\centering\let\newline\\\arraybackslash\hspace{0pt}}m{#1}}
\newcolumntype{R}[1]{>{\raggedleft\let\newline\\\arraybackslash\hspace{0pt}}m{#1}}
\DeclareRobustCommand{\Sec}{\ifmmode\mathsection\else\textsection\fi}
\newcommand\xleftrightarrow[2][]{%
  \ext@arrow 9999{\longleftrightarrowfill@}{#1}{#2}}
\newcommand\longleftrightarrowfill@{%
  \arrowfill@\leftarrow\relbar\rightarrow}
\title{DK Conjecture for Some $K$-inequivalences from Grassmannians}
\author{Naichung Conan Leung and Ying Xie}
\address{The Institute of Mathematical Sciences and Department of Mathematics,
The Chinese University of Hong Kong, Shatin, N.T., Hong Kong}\email{leung@math.cuhk.edu.hk}
\address{Department of Mathematics, Science School, Southern University of Science and Technology,
Shenzhen, China}\email{xiey@sustech.edu.cn}
\begin{document}

\begin{abstract}
The DK conjecture of Bondal-Orlov \cite{bondal2002derived} and Kawamata \cite{kawamata2002d} states that there should be an embedding of bounded derived categories for any $K$-inequivalence, which is proved to be true for the toroidal case (\cite{kawamata20054}, \cite{kawamata2006}, \cite{kawamata2013} and \cite{Kawamata2016}). In this paper, we construct examples of non-toroidal $K$-inequivalences from Grassmannians inspired by \cite{kuznetsov2018derived}, \cite{ueda2019g_2}, \cite{morimura2021derived} and \cite{kanemitsu2022mukai}, and we show that these $K$-inequivalences satisfy the DK conjecture. 
\end{abstract}

\maketitle

\section{Introduction}
The bounded derived categories of coherent sheaves $D^b Coh(X)$ (or simply $D(X)$ throughout this paper) on a smooth projective variety $X$ encodes a lot of geometric information. For instance, the dimension and the canonical ring of $X$ can be determined from $D(X)$. The canonical divisor $K_X$ is one of the most important invariants of $X$. The change of $K_X$ will appear if we run the minimal model program. Also, the Serre functor of $D(X)$ is isomorphic to $-\otimes \shO(K_X)[\dim X]$.  
 
Recall that a birational map $f: X_2\dashrightarrow X_1$ between two smooth projective varieties $X_1$ and $X_2$ is called a \emph{$K$-inequivalence} if there is a third smooth projective variety $X$ with two birational morphisms $\pi_1: X\rightarrow X_1$ and $\pi_2: X\rightarrow X_2$ such that $f=\pi_1\circ \pi_2^{-1}$ and $\pi_2^*K_{X_2}=\pi_1^*K_{X_1}+D$ for some effective divisor $D$ on $X$.  If $\pi_2^*K_{X_2}=\pi_1^*K_{X_1}$, then $f$ is called a \emph{$K$-equivalence}. 

\begin{equation*}
 \begin{tikzcd}
  &\arrow[dl,swap, "\pi_2"]  X \arrow[dr, "\pi_1"]\\
   X_2 \arrow[rr,dashed, "f"]     && X_1.
\end{tikzcd}
\end{equation*} 

\begin{conjecture}[Bondal-Orlov \cite{bondal2002derived} and Kawamata \cite{kawamata2002d}]\label{kbo}
 
For any $K$-inequivalence, 
\begin{equation*}
\begin{tikzcd}
 X_2 \arrow[r,dashed]   & X_1,
\end{tikzcd}
\end{equation*}
 there is an exact fully-faithful embedding of triangulated categories: 
\begin{equation*}
\begin{tikzcd}
 D(X_1) \arrow[r,hook]   & D(X_2).
\end{tikzcd}
\end{equation*}
For any $K$-inequivalence, $X_2\dashrightarrow X_1$, there is an equivalence $D(X_2)\cong D(X_1)$. 
\end{conjecture}

Unlike the situations for $K$-equivalences (see \cite{kawamata2017birational} for the survey of DK Conjecture), there are few examples of $K$-inequivalences proven to satisfy the DK Conjecture except for some toroidal type (see \cite{kawamata20054}, \cite{kawamata2006}, \cite{kawamata2013} and \cite{Kawamata2016}). In this paper, we construct examples of non-toroidal type $K$-equivalences whose exceptional locus are Grassmannians and show that these $K$-inequivalences satisfy the DK Conjecture. We now describe the examples of $K$-inequivalences. 

Start with the partial flag variety $$Fl(1,2,N)=\{(V_1, V_2)|\, V_1\subset V_2\subset \CC^N, \,\text{dim } V_1=1, \text{dim } V_2=2\}.$$ It admits two projective space fibrations onto $Gr(1, N)=\PP^{N-1}$ and $Gr(2, N)$, respectively:
\begin{center}
\begin{tikzcd}
                       & \arrow[dl, swap, "p_2"]  Fl(1,2,N)  \arrow[dr,"p_1"] \\
                         Gr(2, N) &  & \PP^{N-1}.
\end{tikzcd}
\end{center}
Note that $Fl(1,2,N)\cong \PP_{Gr(2,N)}(U)\cong \PP_{\PP^{N-1}}(Q)$, where $U$ is the tautological rank 2 subbundle on $Gr(2,N)$ and $Q$ is the tautological rank $(N-1)$ quotient bundle on  $\PP^{N-1}$. 
Denote the ample generator of $Pic(\PP^{N-1})$ (resp. $Pic(Gr(2,N))$) by $h$ (resp. H).  


It is easy to check that 
\begin{align}\label{projformula}
& p_{2*}\shO(h+H)=U^{\vee}\otimes \shO(H),\\
& p_{1*}\shO(h+H)=Q^{\vee}\otimes\shO(2h).
\end{align}
Let $X_2=\PP_{Gr(2, N)}(U(-H)\oplus\shO)$ and $X_1=\PP_{\PP^{N-1}}(Q(-2h)\oplus \shO)$. Then $X_1$ contains $Gr(1, N)=\PP^{N-1}$ as a closed subvariety by the following natural morphisms
$$\PP^{N-1}\cong \PP_{\PP^{N-1}}(0\oplus \shO)\hookrightarrow \PP_{\PP^{N-1}}(Q(-2h)\oplus \shO).$$ Similarly, $X_2$ contains $Gr(2, N)$ as a closed subvariety. Consider two blowing-ups $Bl_{Gr(2,N)} X_2$ and $Bl_{\PP^{N-1}}X_1$, both of which are isomorphic to $X:=\PP_{Fl(1, 2, N)}(\shO(-h-H)\oplus \shO)$, with the same exceptional divisor $E\cong Fl(1,2,V)$. Therefore, we get a birational map $f$ from $X_2$ to $X_1$, and the geometry can be summarized in the following diagram: 
 \begin{equation}\label{aflip}
 \begin{tikzcd}
 &E=Fl(1,2,N) \arrow[dl,swap,"p_1"] \arrow[d, hook, "j"]\arrow[dr,"p_2"]\\
  Gr(2,N)\arrow[d,hook] & \arrow[dl,swap, "\pi_2"]  X \arrow[dr, "\pi_1"]&\arrow[d, hook',swap] \PP^{N-1}\\
   X_2 \arrow[rr,dashed, "f"]   &  & X_1. 
\end{tikzcd}
\end{equation}

Now, an easy computation implies that
$$D=\pi_2^*K_{X_2}-\pi_1^{*}K_{X_1}=(N-3)E.$$ 

The birational map $f$ is a $K$-inequivalence for $N>3$ and a $K$-equivalence for $N=3$.

\begin{theorem}\label{thm}
The  $K$-inequivalence birational map $f: X_2\dashrightarrow X_1$ in (\ref{aflip}) satisfies the DK Conjecture, i.e., there is a fully-faithful embedding of triangulated categories:
\begin{center}
\begin{tikzcd}
D(X_1) \arrow[r, hook, "\Phi"] & D(X_2). 
\end{tikzcd}
\end{center}
\end{theorem}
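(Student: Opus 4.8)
The plan is to realize the embedding through the common resolution $X$ together with Orlov's blow-up formula, exploiting the two projective bundle descriptions of the exceptional divisor $E=Fl(1,2,N)$. First I would take as candidate the Fourier--Mukai functor
\[
\Phi := \pi_{2*}\big(L\pi_1^{*}(-)\otimes \shL\big)\colon D(X_1)\to D(X_2),
\]
for a line bundle $\shL=\shO_X(aE)$ with the integer $a$ to be fixed later. Since $\pi_1\colon X\to X_1$ is a blow-down we have $\pi_{1*}L\pi_1^{*}=\id$, so $L\pi_1^{*}$ is fully faithful, and $(-)\otimes\shL$ is an autoequivalence; hence $\Phi$ will be fully faithful as soon as the twisted image $L\pi_1^{*}D(X_1)\otimes\shL$ lands inside the admissible subcategory $\pi_2^{*}D(X_2)\subseteq D(X)$, because $\pi_{2*}$ restricted to $\pi_2^{*}D(X_2)$ is the inverse equivalence of $\pi_2^{*}$ (using $\pi_{2*}\pi_2^{*}=\id$).

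Next I would invoke Orlov's blow-up theorem for $\pi_2\colon X=Bl_{Gr(2,N)}X_2\to X_2$, a blow-up of codimension $2$, which yields a semiorthogonal decomposition of $D(X)$ whose only exceptional component is $\shE=j_{*}p_2^{*}D(Gr(2,N))$. Thus the desired containment is equivalent to $\Hom_X\big(L\pi_1^{*}a\otimes\shL,\ j_{*}p_2^{*}b\big)=0$ for all $a\in D(X_1)$, $b\in D(Gr(2,N))$ and all shifts. Using $(j^{*},j_{*})$-adjunction together with $j^{*}L\pi_1^{*}=L(\pi_1 j)^{*}=L(\iota p_1)^{*}=Lp_1^{*}L\iota^{*}$, where $\iota\colon\PP^{N-1}\hookrightarrow X_1$ is the zero section and $\pi_1 j=\iota p_1$, this becomes the cohomological vanishing
\[
\Hom_{E}\!\big(Lp_1^{*}\alpha\otimes\shL|_E,\ p_2^{*}b\big)=0,\qquad \alpha\in D(\PP^{N-1}),\ b\in D(Gr(2,N)),
\]
where I have used that $L\iota^{*}a$ ranges over all of $D(\PP^{N-1})$ as $a$ ranges over $D(X_1)$. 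Since $\shO_X(-E)|_E=\shO_E(H+h)$, the twist contributes $\shL|_E=\shO_E(-a(H+h))$.

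I would then reduce to a finite check by running $\alpha$ and $b$ through full exceptional collections: the Beilinson collection $\{\shO(jh)\}$ on $\PP^{N-1}$ and the Kapranov collection $\{\Sigma^{\lambda}U^{\vee}\}$ on $Gr(2,N)$. Each resulting group is the hypercohomology of an explicit homogeneous bundle, twisted by a line bundle, on the flag variety $Fl(1,2,N)$, which I would compute by Borel--Weil--Bott---most efficiently by pushing forward along the $\PP^1$-bundle $p_2\colon Fl(1,2,N)=\PP(U)\to Gr(2,N)$ (so that $p_{2*}p_1^{*}\shO(mh)=\Sym^{m}U^{\vee}$) and then applying Bott vanishing for homogeneous bundles on $Gr(2,N)$. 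The point is to choose the single integer $a$ so that all these groups vanish simultaneously; a Grothendieck-group count (using that $K(X_i)$ agrees with the $K$-theory of the base of the bundle) shows the complement of $\pi_1^{*}D(X_1)\otimes\shL$ inside $\pi_2^{*}D(X_2)$ has rank $\tfrac{N(N-3)}{2}$, which is nonnegative exactly when $N\ge 3$ and vanishes precisely in the flop case $N=3$, so the numerics are consistent with the existence of such an $a$ for every flip $N>3$.

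The main obstacle is this last simultaneous vanishing on $Fl(1,2,N)$: one must exhibit a twist $a$ making $Lp_1^{*}\alpha\otimes\shO_E(-a(H+h))$ orthogonal to every $p_2^{*}b$ at once, across the entire Kapranov collection of Schur functors $\Sigma^{\lambda}U^{\vee}$. This is where the representation theory of $Gr(2,N)$ must be matched against the much shorter Beilinson collection on $\PP^{N-1}$ through the two fibrations, and where the flip inequality $N>3$ is genuinely used---it guarantees that the embedding is strict rather than essentially surjective, the $N=3$ equality $\tfrac{N(N-3)}{2}=0$ recovering the flop equivalence. Once the vanishing is established, $\Phi$ is a composite of fully faithful functors, giving the desired derived embedding $D(X_1)\hookrightarrow D(X_2)$ and completing the proof.
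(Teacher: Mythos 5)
There is a genuine gap, and it is fatal to the strategy rather than a fixable detail: the containment $\LL\pi_1^{*}D(X_1)\otimes\shL\subseteq\pi_2^{*}D(X_2)$ to which you reduce everything is \emph{false for every choice of the integer $a$}, and in fact for every line bundle $\shL$ on $X$. Indeed, let $q\colon X\to Fl(1,2,N)$ be the bundle projection, so that $\Pic(X)=q^{*}\Pic(Fl(1,2,N))$ and $\shO_X(aE)=q^{*}\shO(-a(h+H))$ (since $\shO_X(E)|_E=\shO(-h-H)$). Using the decomposition (\ref{SOD1}) and $(j_*,j^!)$-adjunction, an object $T\in D(X)$ lies in $\pi_2^{*}D(X_2)$ if and only if $\RR Hom(j_*p_2^{*}b,T)=0$ for all $b\in D(Gr(2,N))$, which by $j^!T=j^{*}T\otimes\shO_E(E)[-1]$ and the projection formula is equivalent to $\RR p_{2*}\bigl(j^{*}T(-h)\bigr)=0$. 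By Lemma \ref{pfp}, $\RR p_{2*}\shO(mh)=0$ if and only if $m=-1$. Now test the two line bundles $\shO_{X_1}$ and $\shO_{X_1}(h)$ in $D(X_1)$, noting $\pi_1^{*}\shO_{X_1}(h)=q^{*}\shO(h)$: the criterion applied to $\pi_1^{*}\shO_{X_1}\otimes\shO_X(aE)=q^{*}\shO(-a(h+H))$ forces $a=0$, while applied to $\pi_1^{*}\shO_{X_1}(h)\otimes\shO_X(aE)=q^{*}\shO((1-a)h-aH)$ it forces $a=1$. So no twist works; the same contradiction (with shifted values of $a$) occurs if one instead uses the form of Orlov's decomposition with the exceptional component on the left, which is what your orthogonality $\Hom(\LL\pi_1^{*}\alpha\otimes\shL,\,j_*p_2^{*}b)=0$ corresponds to, and it persists for an arbitrary line bundle $q^{*}\shO(ch+dH)$. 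The "main obstacle" you flag at the end --- the simultaneous Borel--Weil--Bott vanishing --- is therefore not merely hard: it is false, so no amount of representation theory can establish it. This is consistent with the known failure (Namikawa) of the naive pull--push through the common blow-up for the Mukai flop, which is exactly the case $N=3$ here; note that even in that flop case the paper's Remark \ref{proofn1} must insert a mutation $\LL_{\shO(-2h)}$, not a line-bundle twist, to obtain the equivalence.

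This failure is precisely why the paper's proof takes the shape it does. Since no twist aligns the two Orlov decompositions, the paper instead rewrites the orthogonal complement of $\pi_2^{*}D(X_2)$ through four inductive chains of mutations (producing (\ref{SOD1mut}), in which many symmetric powers $S^kU^{\vee}$ unavoidably survive --- objects with no counterpart among the $\shO(ah+bH)$ generating the complement of $\pi_1^{*}D(X_1)$), and then cancels these against (\ref{SOD2}) by the chess-game method, locating ${}^{\perp}\shD_2$ inside an explicit region of the chessboard. The resulting embedding $\Phi$ is a composition of mutation functors with $\pi_1^{*}$ and $\pi_{2*}$, not a Fourier--Mukai transform whose kernel is a line bundle on $X$. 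Your Grothendieck-group count $N(N-3)/2$ is correct, but it only shows the numerics are consistent with \emph{some} embedding; it cannot certify your particular functor. To salvage a kernel-type approach one would have to replace $\shO_X(aE)$ by a genuinely non-invertible kernel (for instance one supported on the fiber product of $X_1$ and $X_2$ over the common contraction, as in the Mukai flop case), or else follow the paper and build $\Phi$ from mutations.
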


\begin{remark}
\
\begin{enumerate}
\item
When $N=3$, the birational map $f: \PP(\Omega_{\PP^{2}}\oplus \shO) \dashrightarrow \PP(\Omega_{\check{\PP}^2}\oplus \shO)$ is the Mukai flop, and the functor $\Phi$ is an equivalence in this case. (see Remark \ref{proofn1} or \cite{morimura2021derived}). 
\item
Theorem \ref{thm} holds for any $K$-inequivalence which locally looks like (\ref{aflip}). 

\item The functor $\Phi$ in Theorem \ref{thm} is a kernel functor induced by a complicated complex of coherent sheaves (even for $N=4$) in $D(X_1\times X_2)$ (see \ref{functor}). It is interesting to give a geometric interpretation of the complex. 

\item The above construction of the $K$-inequivalence can be generalized to any generalized partial flag variety with two different projective space fibrations.  In \cite{cflip}, we construct a series of $K$-inequivalences from generalized partial flag varieties of complex semi-simple algebraic groups. Moreover, we show that these $K$-inequivalences are flips in the minimal model program. In particular, the $K$-inequivalence $f$ above is a flip. 

\item The DK conjecture for $K$-equivalences from ($\PP^n$, $\PP^{n*}$), ($Gr(2, 5), Gr(3, 5)$), $C_2$-Grassmannian pairs and $G_2$ Grassmannians pairs are proved by using similar methods in \cite{kuznetsov2018derived}, \cite{ueda2019g_2}, \cite{morimura2021derived} and \cite{rampazzo2021equivalences}. 

\end{enumerate}
\end{remark}

\subsection*{Conventions}
In this paper, $\PP(V)=Proj (Sym^{\bullet}V^{\vee})$ for any vector bundle $V$. The derived functors $\RR Hom(-,-)$ and $\Ext^{\bullet}(-,-)$ are taken over the total space $X$. We will omit the natural functors $p_1^*, p_2^*$ and $j_*$, $i_{1*}$, $i_{2*}$ if no confusion occurs. 

\subsection*{Strategy of Proof}
Firstly, we can embed $D(X_1)$ and $D(X_2)$ into $D(X)$ by Orlov's blow-up formula \cite{orlov1992projective} so that we have the following two 
semiorthogonal decompositions (SOD) of $D(X)$:
\begin{align}
& D(X)=\langle \pi_1^{*}D(X_2), j_*p_2^* D(Gr(2,N)) \rangle, \label{SOD1}\\
& D(X)=\Big\langle \pi_2^*D(X_1), \langle j_*(p_1^*D(\PP^{N-1}))(kH)\rangle_{0\leq k \leq N-3}\rangle\Big\rangle. \label{SOD2}
\end{align}
Both the left orthogonal complements of $D(X_1)$ and $D(X_2)$ are (copies) of derived categories of Grassmanianns. It is known that both $D(\PP^{N-1})$ and $D(Gr(2,N))$ admit full exceptional collections by \cite{beilinson1978coherent} and \cite{kuznetsov2008exceptional}. Moreover, $D(\PP^{N-1})$ consists of line bundles only while $D(Gr(2,N))$ involves $S^k U$, the symmetric powers of the tautological subbundle $U$ on $Gr(2, N)$.

Secondly, we use the SOD of $D(Gr(2,N))$ to simplify (\ref{SOD1}) to the form (\ref{SOD1mut}) :
\begin{align*}
D(X)=\Big \langle \shD, \langle \shO(\ell h)\rangle_{-1\leq \ell\leq n-1},\shH, \langle\shF_{\ell}\rangle_{0\leq \ell\leq n-2}, S^{n-1} U(H), \shA(\ell H)\rangle_{2\leq \ell\leq 2n-2}\Big \rangle 
\end{align*}
via mutation techniques by Kuznetsov in \cite{kuznetsov2010derived} (C.f. also \cite{kuznetsov2018derived}, \cite{ueda2019g_2} or \cite{morimura2021derived}). Here $\shD$ is equivalent to $D(X_2)$. However, there are still lots of symmetric powers of $U$ remaining in the left orthogonal complement of $\shD$, which does not happen for the flop situation (see remark \ref{proofn1}).    

Finally, to get rid of the remaining $S^kU$'s, we apply the chess-game method introduced in \cite{thomas2018notes} (also in \cite{jiang2021categorical}) to prove Theorem \ref{thm}.  Roughly speaking, the chess-game method is an analogy of the spectral sequence argument in cohomology theory. It is a systematic method to do cancellation of categories.

The rest of this paper is organized as follows. Section 2 lists all cohomology and mutations that will be used in the proof of Theorem \ref{thm}. 
Section 3 presents the mutation process on $D(X)$ for the case when $N$ is odd. Section 4 completes the detailed proof of Theorem  \ref{thm} for the case when $N$ is odd. Section 5 sketches the proof of Theorem \ref{thm} for the case when $N$ is even. The Appendix contains background knowledge on mutations and the Borel-Weil-Bott Theorem that is used in this paper. 

\subsection*{Acknowledgements}
We thank Kowk Wai Chan, Jesse Huang, Lisa Li, Laurent Manivel, Yukinobu Toda,  Zhiwei Zheng, Yan Zhou, and especially Yalong Cao, Qingyuan Jiang, Mikhail Kapranov, Yujiro Kawamata, Eduard Looijenga, Chin-Lung Wang for many helpful discussions and suggestions when preparing this paper. The authors are supported by grants from the Research Grants Council of the Hong Kong Special Administrative Region, China (Project No. CUHK14301117 and CUHK14303518). 

\section{Vanishing of Cohomology and Mutations}
In this section, we list all vanishing results and mutations that will be used later in the subsequent sections. For $\ell\geq 0$, let 
\begin{align*}
& \shA^{l}=\langle \shO, U^{\vee}, \cdots, S^{n-l-1}U^{\vee} \rangle;; \\
&\shA_{l}=\langle S^{l} U^{\vee}, S^{l+1}U^{\vee}, \cdots, S^{n-1}U^{\vee}\rangle\\
& \shB_{\ell}=\langle \shO((\ell+1)h), S^{\ell}U^{\vee}(H-h)\rangle;\\
&\shC_{\ell}=\langle \shO(\ell h),S^{\ell}U^{\vee}(H-2h)\rangle;\\  
& \shE_{\ell}=\langle \shO(\ell h),S^{\ell-1}U^{\vee}(H-h),\shO((\ell+1)(H-h)-h)\rangle;\\
&\shF_{\ell}=\begin{cases} 
\langle S^{\ell}U^{\vee}(H), \shO((\ell+2)(H-h)),\shO((\ell+3)(H-h)-h)\rangle &\mbox{if } \ell\leq n-4;\\
\langle S^{\ell}U^{\vee}(H), \shO((\ell+2)(H-h))\rangle &\mbox{if } \ell>n-4;\\
\end{cases}\\
&\shH=\begin{cases} 
\langle \shO(H-2h), \shO(H-h)\rangle & \mbox{if } n=2;\\
\langle \shO(H-2h), \shO(H-h), \shO(2H-3h)\rangle &\mbox{if } n\geq 3;
 \end{cases}
\end{align*}



\begin{lemma}[Kapranov \cite{kapranov1988derived} and Kuznetsov \cite{kuznetsov2008exceptional}]  $D(Gr(2,N))$ admits a full exceptional collections: 
\begin{align}\label{SODGr}
D(Gr(2, N))=\begin{cases} \Big\langle\langle \shA^1(kH)\rangle_{0\leq k\leq n-1}, \langle \shA(\ell H)\rangle_{n-2\leq \ell\leq 2n-1}\Big \rangle &\mbox{if } N=2n,\\
 \langle \shA(kH)\rangle_{0\leq k\leq N-1}&\mbox{if } N=2n+1,\end{cases}
 \end{align}
 \end{lemma}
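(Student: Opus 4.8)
The plan is to deduce the statement from Kapranov's full exceptional collection on Grassmannians, reorganized, following Kuznetsov, into a Lefschetz decomposition with respect to the Plücker polarization $\shO(H)=\det U^{\vee}$. The two blocks in play are $\shA=\shA^{0}=\langle \shO,U^{\vee},\dots,S^{n-1}U^{\vee}\rangle$ of length $n$ and its truncation $\shA^{1}=\langle \shO,U^{\vee},\dots,S^{n-2}U^{\vee}\rangle$ of length $n-1$, and the claim is that twists of these by powers of $H$ tile $D(Gr(2,N))$.

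First I would recall Kapranov's theorem \cite{kapranov1988derived}: on $Gr(2,N)$ the bundles $\Sigma^{\lambda}U^{\vee}$, indexed by Young diagrams $\lambda=(\lambda_1\geq\lambda_2\geq 0)$ contained in the $2\times(N-2)$ box and placed in an appropriate order, form a full exceptional collection. Fullness is exactly the input that guarantees generation, and it cannot be replaced by a dimension count: although the number of such $\lambda$ equals $\binom{N}{2}=\rank K_0(Gr(2,N))$, a semiorthogonal family of that cardinality need not be full a priori (phantom issues). The rank count will serve only as a consistency check; genuine fullness of the collection in the statement is inherited from Kapranov's, since the two differ by mutations and mutations preserve the generated subcategory.

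Next I would translate Kapranov's bundles into the line-bundle-twisted form of the statement using the rank-two identity $\Sigma^{(\lambda_1,\lambda_2)}U^{\vee}\cong S^{\lambda_1-\lambda_2}U^{\vee}\otimes(\det U^{\vee})^{\otimes\lambda_2}=S^{\lambda_1-\lambda_2}U^{\vee}(\lambda_2 H)$. Thus every object is of the form $S^{a}U^{\vee}(bH)$ with $a\geq 0$, $b\geq 0$, and the blocks $\shA(\ell H)$, $\shA^{1}(kH)$ are the length-$n$ (resp. length-$(n-1)$) ``columns'' $\{S^{a}U^{\vee}(\ell H)\}_a$. I would then match this against Kuznetsov's minimal Lefschetz decomposition \cite{kuznetsov2008exceptional}: rectangular with $N$ equal blocks $\shA$ of length $n$ when $N=2n+1$, and with support partition consisting of $n$ full blocks $\shA$ and $n$ short blocks $\shA^{1}$ when $N=2n$. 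The combinatorial heart here is to check that the displayed twists exhaust the index set bijectively, with the total object count equal to $\binom{N}{2}$; in the even case one must verify that the short and full blocks sit at exactly the right powers of $H$ so that nothing is double counted.

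The real work, and where I expect the main obstacle, is the semiorthogonality of the reorganized collection, which I would verify by Borel--Weil--Bott. For two objects the relevant complex is $\RR\Hom\big(S^{a}U^{\vee}(bH),S^{a'}U^{\vee}(b'H)\big)=H^{\bullet}\big(Gr(2,N),\,S^{a}U\otimes S^{a'}U^{\vee}\otimes\shO((b'-b)H)\big)$; decomposing $S^{a}U\otimes S^{a'}U^{\vee}$ by the Littlewood--Richardson rule into a sum of $\Sigma^{\nu}U^{\vee}$ and computing each summand by Bott's theorem (after adding $\rho$, a weight is either singular, giving vanishing, or regular, giving cohomology in a single degree), one obtains the vanishings that enforce the ordering. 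The delicate point is not any individual cohomology group but the bookkeeping in the even case: confirming that the backward $\Hom$'s between $\shA^{1}$-blocks and $\shA$-blocks all vanish and tracking the mutations that move the short blocks $\shA^{1}$ to the front. The cleanest route is to take Kuznetsov's Lefschetz reorganization as a black box for semiorthogonality and invoke Kapranov only for fullness, reducing the lemma to a notational identification, with the Bott computations above recorded as an independent check.
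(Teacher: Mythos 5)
Your reconstruction is exactly the argument the paper intends: the lemma is stated without proof, quoted from the two cited references, and your route (Kapranov's Schur-functor collection $\Sigma^{\lambda}U^{\vee}$ for fullness, the rank-two identity $\Sigma^{(\lambda_1,\lambda_2)}U^{\vee}\cong S^{\lambda_1-\lambda_2}U^{\vee}(\lambda_2 H)$ to pass to twisted symmetric powers, and Kuznetsov's Lefschetz reorganization with Borel--Weil--Bott/Littlewood--Richardson supplying semiorthogonality) is precisely that citation unpacked, so you are taking the same approach. As a useful byproduct, your rank-count check exposes a typo in the paper's displayed even case: the printed ranges (short blocks $\shA^1(kH)$ for $0\le k\le n-1$ together with full blocks $\shA(\ell H)$ for $n-2\le\ell\le 2n-1$) give $2n^2+n$ objects, exceeding $\operatorname{rank} K_0(Gr(2,2n))=2n^2-n$, whereas the correct form you state (full blocks $\shA$ at twists $0,\dots,n-1$, short blocks $\shA^1$ at twists $n,\dots,2n-1$) is the one the paper actually uses in its Section 5.
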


\begin{lemma}\label{van}
On the total space $X$, we have the following vanishing results. 
\begin{enumerate}
\item \label{1ind}
For any $0\leq k\leq n-1$, 
 $$\RR Hom(S^{n-k-1}U^{\vee}(H-h), \shA^k)=0.$$
\item \label{2ind} For $0\leq k \leq n-1$, 
\begin{align*}
&\RR Hom(\shA_{k+2}(-h), \shO(kh))=0;\\
& \RR Hom(\shA^{n-k-1}(H-h), \shO(kh))=0.
\end{align*} 
\item \label{3ind} For $1\leq k< \ell\leq n-2$,   
$$\RR Hom(\shC_{\ell}, S^{k-1}U^{\vee}(H-h))=0.$$         
\item \label{4ind} For $1\leq k\leq n-2$ and $n-k\leq \ell\leq n-1$,
\begin{align*}
&\RR Hom(S^{n-2-k} U^{\vee}(H-h), \shO(\ell H))=0;\\
&\RR Hom(S^{n-k} U^{\vee}(H-h), \shA^{k+1}(H))=0;\\
&\RR Hom(\shO((n-k)(H-h)-h), \shA^{k+2}(H))=0.\\
\end{align*}
\vspace{-1cm}
\item\label{step3} For any $0\leq \ell<k \leq r$, $r=\left[(n-1)/2\right]$, i.e., the greatest integer no bigger than $\left[(n-1)/2\right]$,  
$$
\RR Hom(\shA_{n-2\ell-1}((n+\ell)H), \shA^{2k+1}((n+k)H))=0.
$$
\item\label{ab} Either (i) $2\leq a-b \leq 2n-3$ and $b>0$ or (ii) $b<0$, 
$$\Ext^{\bullet}(\shO(ah), \shO(bH))=0.$$ 
\end{enumerate}
\end{lemma}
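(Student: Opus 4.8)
The unifying point I would exploit is that every object appearing in the six statements is of the form $j_{*}M$ for a sheaf $M$ on the exceptional divisor $E=Fl(1,2,N)$, pulled back from $Gr(2,N)$ or $\PP^{N-1}$ and twisted by $H$ and $h$. Since $E\hookrightarrow X$ is a smooth divisor with normal bundle $N_{E/X}=\shO(-H-h)$, the Koszul description of $j^{*}j_{*}$ gives, for sheaves $F,G$ on $E$,
\begin{equation*}
\RR Hom_{X}(j_{*}F,\,j_{*}G)\;\cong\;\RR Hom_{Fl(1,2,N)}(F,G)\;\oplus\;\RR Hom_{Fl(1,2,N)}\bigl(F,\,G(-H-h)\bigr)[-1].
\end{equation*}
So the plan is to reduce each statement to this two-term expression, push it down to $Gr(2,N)$, and finish with Borel--Weil--Bott.

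For the descent I would use the $\PP^{1}$-bundle $p_{2}\colon Fl(1,2,N)\to Gr(2,N)$, for which $\shO(h)$ is the relative hyperplane bundle and $\omega_{p_{2}}=\shO(H-2h)$. Hence $Rp_{2*}\shO(\beta h)=S^{\beta}U^{\vee}$ for $\beta\geq 0$, it vanishes for $\beta=-1$, and it equals $S^{-\beta-2}U(-H)[-1]$ for $\beta\leq -2$. Running the $H$- and $h$-twists of each term through these formulas turns every $\RR Hom$ into a sum of cohomology groups $H^{\bullet}(Gr(2,N),\,S^{i}U^{\vee}\otimes S^{j}U\otimes\shO(mH))$. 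Rewriting $U\cong U^{\vee}(-H)$ and applying the $GL_{2}$ Clebsch--Gordan rule $S^{i}U^{\vee}\otimes S^{j}U^{\vee}=\bigoplus_{t=0}^{\min(i,j)}S^{i+j-2t}U^{\vee}(tH)$ reduces each contribution to the cohomology of a single Schur bundle $\Sigma^{a,b}U^{\vee}$.

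At this stage Borel--Weil--Bott on $Gr(2,N)$ decides everything: $H^{\bullet}(Gr(2,N),\Sigma^{a,b}U^{\vee})=0$ precisely when the shifted weight $(a+N-1,\,b+N-2,\,N-3,\dots,1,0)$ has a repeated entry. For statements \eqref{1ind}--\eqref{4ind} and \eqref{ab} the numerical hypotheses are engineered so that, for every constituent produced above and every value of the twisting parameter, the second entry $b+N-2$ (or occasionally the first) lands inside the tail $\{0,1,\dots,N-3\}$ and so collides with a tail entry; these are therefore finite, if tedious, regularity checks. The degenerate case \eqref{ab}(ii) ($b<0$) is the quickest, since there the relevant $p_{2}$-push-forward already vanishes, so I would dispatch \eqref{1ind}--\eqref{4ind} and \eqref{ab} in that order.

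The real obstacle is \eqref{step3}. Here both arguments are entire twisted blocks $\shA_{n-2\ell-1}((n+\ell)H)$ and $\shA^{2k+1}((n+k)H)$ of the Kapranov collection, and neither carries an $h$-twist; consequently the second Koszul term is killed outright by $Rp_{2*}\shO(-h)=0$, and the statement collapses to the pure semiorthogonality $\RR Hom_{Gr(2,N)}\bigl(\shA_{n-2\ell-1}((n+\ell)H),\,\shA^{2k+1}((n+k)H)\bigr)=0$. Expanding the blocks into their generators $S^{i}U^{\vee}$ and $S^{j}U^{\vee}$ with $n-2\ell-1\leq i\leq n-1$ and $0\leq j\leq n-2k-2$ and running Clebsch--Gordan, I expect the decisive point to be the uniform estimate
\begin{equation*}
0\;\leq\;(k-\ell-i+t)+N-2\;\leq\;N-3-(k-\ell)\;\leq\;N-4,
\end{equation*}
valid for all admissible $i,j,t$ and all $0\leq\ell<k\leq[(n-1)/2]$ (the middle inequality being exactly $i-t\geq 2(k-\ell)+1$), so that the second shifted-weight entry always meets a tail entry and the cohomology vanishes. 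The difficulty is precisely in establishing this single estimate uniformly across the two-dimensional index range $\ell<k$ rather than case by case; once it is in place, all six statements follow by assembling the individual vanishings.
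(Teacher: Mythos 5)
Your proposal follows the same skeleton as the paper's proof: reduce $\RR Hom(j_*F,j_*G)$ (taken over $X$) to computations on $E$ via the Koszul triangle $F\otimes \shO_E(-E)[1]\to \LL j^*j_*F\to F$, push down to $Gr(2,N)$ along $p_2$ using the projection formula for $\RR p_{2*}\shO(\beta h)$ (the paper's Lemma \ref{pfp}), and conclude by vanishing on the Grassmannian. In particular your treatment of part (5) --- the $h$-twisted Koszul term dies because $\RR p_{2*}\shO(-h)=0$, and the untwisted term reduces by Clebsch--Gordan to Schur bundles killed by Borel--Weil--Bott via the estimate $i-t\geq 2(k-\ell)+1$ --- is exactly the paper's appendix argument. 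The one real difference is the endgame for parts (1)--(4) and (6): the paper dispatches almost all of the resulting $\Ext$-groups on $Gr(2,N)$ by simply citing semiorthogonality of the Kapranov--Kuznetsov exceptional collection (\ref{SODGr}), and runs Littlewood--Richardson plus BWB only on the few terms the SOD cannot see (e.g.\ $k=0$, $a=n-1$ in part (1)), whereas you propose uniform Clebsch--Gordan-plus-BWB weight checks for every term. Your route is legitimate and more self-contained (the SOD is itself a BWB consequence), but it converts a one-line citation into a two-parameter family of weight verifications that you leave as ``finite, if tedious, regularity checks''; since those checks are the entire content of parts (1)--(4) and (6), your proof is complete only modulo actually carrying them out.

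Two local slips should be repaired. First, $\RR Hom(j_*F,j_*G)$ is in general only an extension, not a direct sum, of the two terms you write: the Koszul triangle gives a distinguished triangle relating $\RR Hom(j_*F,j_*G)$ to $\RR Hom_E(F,G)$ and $\RR Hom_E(F,G(-H-h))[-1]$, which need not split; this is harmless here, since vanishing of the two outer terms forces vanishing of the middle one. Second, your claim that case (ii) of part (6) ($b<0$) is immediate because ``the relevant $p_2$-push-forward already vanishes'' is false as stated: $\RR p_{2*}\shO(bH-ah)=\shO(bH)\otimes \RR p_{2*}\shO(-ah)$ vanishes only when $a=1$, whatever the sign of $b$. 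What you presumably meant is the $p_1$-push-forward, since $\shO(H)$ is (up to a twist by $p_1^*\shO(h)$) the relative hyperplane bundle of $p_1:E\to \PP^{N-1}$, so that $\RR p_{1*}\shO(bH-ah)=0$ for $2-N\leq b\leq -1$; with that correction, or by running BWB on $\Sigma^{b-1,b-a+1}U^{\vee}$ and $\Sigma^{b-2,b-a-1}U^{\vee}$ as the paper does, case (ii) goes through --- noting that either argument genuinely requires $b$ to be bounded below (for very negative $b$, e.g.\ $a=0$, $b=-N$, the claimed vanishing actually fails by Serre duality), a restriction satisfied in all uses of the lemma but worth making explicit in your uniform-check framework.
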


\begin{proof}We give proof of (1) here, and the rest can be obtained using the same arguments (See Appendix \ref{proofofvan} for detailed proof of others).  By adjunction of pullback-pushforward,
\[
\RR Hom(S^{n-k-1}U^{\vee}(H-h), \shA^k)=\RR Hom_E(\LL j^*j_* S^{n-k-1}U^{\vee}(H-h), \shA^k).
\]
Recall there is a distinguished triangle associated to the closed immersion $j: \,E\hookrightarrow X$:
\begin{align}\label{att}
  \shO(H+h)[1]= \shO_E(-E)[1]\longrightarrow \LL j^*j_* \longrightarrow id \xrightarrow{[1]}
\end{align} 
and it induces a distinguished triangle of complex of vector spaces:
\begin{align*}
& \RR Hom_E(S^{n-k-1}U^{\vee}(H-h), \shA^k) \longrightarrow \RR Hom( S^{n-k-1}U^{\vee}(H-h), \shA^k)\\
& \longrightarrow  \RR Hom_E(S^{n-k-1}U^{\vee}(2H)[1], \shA^k) \xrightarrow{[1]}.
\end{align*}
So, it is sufficient to show the vanishing of the first and third terms. With the help of the projection formula (\ref{projformula}) or Lemma \ref{pfp}, we have the following ($0\leq a \leq n-k-1$):
\begin{align*}
(i) &\,\Ext^{\bullet}_E(S^{n-k-1}U^{\vee}(2H), S^aU^{\vee} )=\Ext^{\bullet}_{Gr(2, N)}(S^{n-k-1}U^{\vee}(2H), S^aU^{\vee}),\\
(ii) &\,\Ext^{\bullet}_E(S^{n-k-1}U^{\vee}(H-h), S^aU^{\vee})=\Ext^{\bullet}_{Gr(2,N)}(S^{n-k-1}U^{\vee}(H), S^aU^{\vee}\otimes U^{\vee})\\
&=\Ext^{\bullet}_{Gr(2,N)}(S^{n-k-1}U^{\vee}(H), S^{a+1}U^{\vee}\oplus S^{a-1}U^{\vee}(H) ). \end{align*}
It is noted that all these vanish by the SOD of $D(Gr(2, N))$ (\ref{SODGr}) except for the case $k=0, a=n-1$:
$$ \Ext^{\bullet}_{Gr(2,N)}(S^{n-1}U^{\vee}(H), S^{n}U^{\vee})=0.$$
By Littlewood-Richardson rule,  $$S^{n-1}U^{\vee}(-H)\otimes S^{n}U^{\vee} =\bigoplus_{t=0}^{n-1}\Sigma^{n-t-1, -n+t}U^{\vee}.$$
Then $$\Ext^{\bullet}_{Gr(2,N)}(S^{n-1}U^{\vee}(H), S^{n}U^{\vee})=\bigoplus_{t=0}^{n-1}H^{\bullet}(Gr(2,N), \Sigma^{n-t-1, -n+t}U^{\vee})=0$$
by  checking the criterion of Theorem \ref{BWB} (BWB).
\end{proof}

For the projection $p_2$, there exists a relative Euler sequence on $E=\PP_{Gr(2,N)}(U)$:
\begin{align*}
0 \longrightarrow \shO(H-h) \longrightarrow U^{\vee} \longrightarrow \shO(h) \longrightarrow 0.
\end{align*}
For each $k$, there are two short exact sequences on $E$ and $X$ by taking $k$-th symmetric power :
\begin{align}\label{euler}
& 0 \longrightarrow S^{k-1}U^{\vee}(H-h) \longrightarrow S^kU^{\vee} \longrightarrow  \shO(kh)
 \longrightarrow 0;\\
& 0  \longrightarrow \shO(k(H-h)) \longrightarrow S^kU^{\vee} \longrightarrow  S^{k-1}U^{\vee}(kh)\longrightarrow 0.
\end{align}
Not surprisingly, these two induce the following mutations (See Appendix \ref{proofofmut} for the proof):
\begin{lemma}\label{mut}For any $1\leq k\leq n-1$, 
\begin{align*}
&(1)\,\, \LL_{S^{k-1}U^{\vee}(H-h)}S^kU^{\vee}=\shO(kh);\\ 
&(2)\, \,\RR_{\shO(kh)} S^{k}U^{\vee} = S^{k-1}U^{\vee}(H-h);\\
&(3) \,\, \RR_{S^{k-1}U^{\vee}(-h)} S^kU^{\vee}=\shO(k(H-h)).
\end{align*}
\end{lemma}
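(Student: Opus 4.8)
The plan is to recognize each of the three identities as the statement that a short exact sequence from (\ref{euler}), regarded through $j_*$ as a distinguished triangle in $D(X)$, is the defining triangle of the indicated mutation. Concretely, for $(1)$ I would take the first sequence in (\ref{euler}),
$$S^{k-1}U^{\vee}(H-h) \longrightarrow S^kU^{\vee} \longrightarrow \shO(kh) \xrightarrow{[1]},$$
and show it is the left–mutation triangle $\RR\Hom(\shE,\shF)\otimes\shE \to \shF \to \LL_{\shE}\shF$ with $\shE=S^{k-1}U^{\vee}(H-h)$ and $\shF=S^kU^{\vee}$; the cone is then $\shO(kh)$. For $(2)$ I would read the \emph{same} sequence as the right–mutation triangle $\RR_{\shE}\shF \to \shF \to \RR\Hom(\shF,\shE)^{\vee}\otimes\shE$ with $\shE=\shO(kh)$, whose fiber is the sub-object $S^{k-1}U^{\vee}(H-h)$. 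For $(3)$ I would use the second sequence in (\ref{euler}), read as a right–mutation triangle, whose fiber is the sub-object $\shO(k(H-h))$.

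The substance in every case is to show that the relevant graded Hom-space over $X$ is one–dimensional and concentrated in degree zero, i.e.
$$\RR\Hom\bigl(S^{k-1}U^{\vee}(H-h),\,S^kU^{\vee}\bigr)\cong \k,\qquad \RR\Hom\bigl(S^kU^{\vee},\,\shO(kh)\bigr)\cong\k,$$
and the analogous statement for $(3)$. Granting this, the canonical (co)evaluation map and the $j_*$ of the Euler map are both nonzero elements of a one–dimensional space, so they coincide up to a nonzero scalar and therefore have the same cone (resp. fiber); this identifies the mutation with the third (resp. first) term of the Euler sequence. To compute each Hom-space I would argue exactly as in the proof of Lemma \ref{van}(\ref{1ind}): use adjunction $\RR\Hom_X(j_*A,j_*B)=\RR\Hom_E(\LL j^*j_*A,B)$ together with the triangle (\ref{att}), which replaces $\LL j^*j_*A$ by the two-term complex assembled from $A$ and $A\otimes\shO(H+h)[1]$. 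This splits the computation into two $\Ext$-computations on $E=\PP_{Gr(2,N)}(U)$, which push down to $Gr(2,N)$ along $p_2$ via the projection formula (\ref{projformula}) and are evaluated by Borel–Weil–Bott (Theorem \ref{BWB}) or by the full exceptional collection (\ref{SODGr}).

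The main obstacle is precisely the contribution of the divisorial correction term $A\otimes\shO(H+h)[1]$ coming from $\shO_E(-E)=\shO(H+h)$: beyond the expected one–dimensional piece supplied by the Euler inclusion or projection (which is visibly nonzero), one must show that this twisted term contributes nothing, i.e. that a whole family of cohomology groups of $S^{b}U^{\vee}$-type bundles on $Gr(2,N)$, twisted by powers of $H$, all vanish. This is the same Borel–Weil–Bott bookkeeping met in Lemma \ref{van}, and the only care needed is to track the Littlewood–Richardson decomposition of the tensor products $S^{a}U^{\vee}\otimes S^{b}U^{\vee}$ (and their $H$-twists) so as to confirm that every weight lands in the singular region of BWB save for the single expected generator. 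Once these vanishings are secured, the three mutation identities follow formally from the Euler triangles.
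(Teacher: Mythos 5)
Your proposal is correct and follows essentially the same route as the paper: the paper likewise computes $\RR\Hom(S^{k-1}U^{\vee}(H-h),S^kU^{\vee})$ via the triangle (\ref{att}), adjunction, the projection formula along $p_2$, and the exceptional collection/BWB on $Gr(2,N)$, finds it to be $\CC$ in degree zero, and then identifies the mutation cone with the Euler sequence (\ref{euler}) exactly as you do (proving (1) and noting (2), (3) are analogous). Your explicit remark that the evaluation map and the pushed-forward Euler map must agree up to a nonzero scalar in the one-dimensional Hom space is a point the paper leaves implicit, but it is the same argument.
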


\section{Mutations on Derived category of $X\, (N=2n+1)$}
In this section, we simplify the SOD of $D(X)$ (\ref{SOD1}) by mutation techniques when $N=2n+1$ ($N=2n$ case will be explained in section \ref{even}).\footnote{The readers can refer to Appendix \ref{appa} for the preliminaries and background knowledge of left and right mutations.}   
The main result of this section is 
\begin{proposition} For $n\geq 2$, 
\begin{align}\label{SOD1mut}
D(X)=\Big \langle \shD, \langle \shO(\ell h)\rangle_{-1\leq \ell\leq n-1},\shH, \langle\shF_{\ell}\rangle_{0\leq \ell\leq n-2}, S^{n-1} U(H), \shA(\ell H)\rangle_{2\leq \ell\leq 2n-2}\Big \rangle 
\end{align}
where $\shD=\LL_{\langle \shA(-h), \shA(H-h)\rangle} \pi_2^*D(X_2).$
\end{proposition}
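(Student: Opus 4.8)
The plan is to begin from the semiorthogonal decomposition (\ref{SOD1}), substitute the odd-case Kapranov--Kuznetsov collection (\ref{SODGr}) (with $N=2n+1$) for the factor $j_{*}p_{2}^{*}D(Gr(2,N))$, and then transport the resulting collection into the shape (\ref{SOD1mut}) by a long but explicit chain of mutations, every elementary step of which is either an Euler mutation from Lemma \ref{mut} or a commutation justified by a vanishing from Lemma \ref{van}.

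First I would write (\ref{SOD1}) in the expanded form
\begin{equation*}
D(X)=\big\langle\, \pi_{2}^{*}D(X_2),\ \shA,\ \shA(H),\ \shA(2H),\ \ldots,\ \shA((2n-2)H),\ \shA((2n-1)H),\ \shA(2nH)\,\big\rangle .
\end{equation*}
A count of exceptional objects already dictates the shape of the answer: the $2n-3$ interior blocks $\shA(\ell H)$ with $2\le \ell\le 2n-2$ ought to survive untouched and give the last summand $\langle\shA(\ell H)\rangle_{2\le \ell\le 2n-2}$ of (\ref{SOD1mut}), while the four boundary blocks $\shA,\ \shA(H)$ (adjacent to $\pi_{2}^{*}D(X_2)$) and $\shA((2n-1)H),\ \shA(2nH)$ carry exactly the $4n$ objects that must be reassembled into $\langle\shO(\ell h)\rangle_{-1\le \ell\le n-1}$, $\shH$, $\langle\shF_{\ell}\rangle_{0\le \ell\le n-2}$ and $S^{n-1}U(H)$. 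This is the target I would keep fixed throughout.

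The heart of the argument is the conversion of symmetric powers into line bundles. Since each $S^{k}U^{\vee}$ carries only $H$-twists, to manufacture the $h$-twisted objects of (\ref{SOD1mut}) I would feed the boundary blocks through the Euler mutations of Lemma \ref{mut}, which replace $S^{k}U^{\vee}$ by $\shO(kh)$ or $\shO(k(H-h))$; the genuinely negative $h$-twists (down to $\shO(-h)$, and those inside $\shA(-h),\shA(H-h)$) should arise from the blow-up relation $\shO_{X}(-E)=\shO(H+h)$ and the triangle (\ref{att}), i.e.\ from passing a block to the opposite side of $\pi_{2}^{*}D(X_2)$. Having simplified a boundary block in this way, I would slide its outputs into place: Lemma \ref{van}(1)--(4) and (6) are precisely the vanishings $\RR\Hom=0$ making the intervening commutations trivial swaps, while part (5), whose twists sit in the centre of the $H$-range, should be what lets the simplified far-right blocks be carried leftwards across the interior blocks without disturbing them. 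Performed in the correct order this yields the line bundles $\shO(\ell h)$ and the block $\shH$, isolates the surviving symmetric power $S^{n-1}U(H)$, and rewrites the auxiliary blocks $\shB_{\ell},\shC_{\ell},\shE_{\ell}$ (which serve as intermediate stages) into the $\shF_{\ell}$.

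The last step is to identify the subcategory coming from $\pi_{2}^{*}D(X_2)$: once the collection has been brought to a form in which $\pi_{2}^{*}D(X_2)$ is immediately preceded by $\shA(-h),\shA(H-h)$, a single left mutation of $\pi_{2}^{*}D(X_2)$ through $\langle\shA(-h),\shA(H-h)\rangle$ produces, by definition, the category $\shD$, after which the residual Euler mutations of $\shA(-h),\shA(H-h)$ (which no longer meet $\shD$) complete the rewriting into (\ref{SOD1mut}). I expect the main obstacle to be not any single step but the bookkeeping of the entire chain: one must order the mutations so that at each stage the precise numerical hypotheses of the relevant part of Lemma \ref{van} are met, and must treat separately the boundary indices where the generic vanishing degenerates and a direct Bott--Borel--Weil computation is required, exactly as in the exceptional case $k=0,\ a=n-1$ encountered in the proof of Lemma \ref{van}(1).
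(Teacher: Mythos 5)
Your overall skeleton agrees with the paper's: expand (\ref{SOD1}) using the odd-case collection (\ref{SODGr}), leave the interior blocks $\shA(\ell H)$, $2\le\ell\le 2n-2$, untouched, define $\shD=\LL_{\langle\shA(-h),\shA(H-h)\rangle}\pi_2^*D(X_2)$, and reassemble the four blocks $\shA$, $\shA(H)$, $\shA((2n-1)H)$, $\shA(2nH)$ into $\langle\shO(\ell h)\rangle$, $\shH$, $\langle\shF_\ell\rangle$, $S^{n-1}U(H)$ via the Euler mutations of Lemma \ref{mut} and the commutations of Lemma \ref{van}; your count of $4n$ objects is also correct. The genuine gap is in how you transport the two far-right blocks. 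You propose to simplify $\shA((2n-1)H),\shA(2nH)$ in place and then carry the simplified outputs leftwards \emph{across} the interior blocks, citing Lemma \ref{van}(\ref{step3}). That step fails twice over. First, Lemma \ref{van}(\ref{step3}) concerns $\RR Hom(\shA_{n-2\ell-1}((n+\ell)H), \shA^{2k+1}((n+k)H))$, i.e.\ pieces whose $H$-twists both lie in the range $[n,\,n+r]$, $r=\left[(n-1)/2\right]$; it says nothing about objects twisted by $(2n-1)H$ or $2nH$, and in the paper it is used only later, in the chess-game section, not in this proposition. Second, no such commutation can hold: sliding a block $B$ leftwards past a block $T$ unchanged requires $\RR Hom(T,B)=0$, but for $\ell<m$ one computes with the triangle (\ref{att}) and Lemma \ref{pfp} that $\RR Hom(j_*\shO(\ell H), j_*\shO(mH))\cong H^{\bullet}(E,\shO((m-\ell)H))\neq 0$, so the interior blocks are not left-orthogonal to the far-right blocks and block the passage.

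What the paper does instead --- and what your ``blow-up relation / opposite side of $\pi_2^*D(X_2)$'' remark only gestures at --- is to move $\langle\shA((2n-1)H),\shA(2nH)\rangle$ \emph{around the end} of the semiorthogonal decomposition \emph{before} simplifying anything, using Lemma \ref{lem:mut}(3): mutating a block from the far right to the far left is the Serre functor, i.e.\ $-\otimes K_X[\dim X]$, and since $K_X\big|_E=\shO(-h-(2n-1)H)$ the two blocks become exactly $\shA(-h)$ and $\shA(H-h)$, with no vanishing hypotheses needed (the triangle (\ref{att}) plays no role in this move; it only enters the Hom computations). After this single move all four blocks to be processed sit contiguously as $\shA(-h),\shA(H-h),\shA,\shA(H)$ next to $\pi_2^*D(X_2)$; one then left-mutates $\pi_2^*D(X_2)$ through the first two to create $\shD$, and the rest of the proof is four nested inductions (passing through the auxiliary blocks $\shB_\ell,\shC_\ell,\shE_\ell$) carried out entirely inside this contiguous group, using only Lemma \ref{van}(\ref{1ind})--(\ref{4ind}),(\ref{ab}) and Lemma \ref{mut}. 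So your plan is repairable, but the order of operations must be reversed --- transport first (by the canonical twist), simplify second; as written, the transport step in your sketch has no valid justification.
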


\begin{proof}
At first we left mutate $\langle \shA((2n-1)H), \shA(2nH)\rangle$ to the far left. Note that $$\LL_{\langle\shA((2n-1)H), \shA(2nH)\rangle^{\perp}}\bigg|_{\langle\shA((2n-1)H), \shA(2nH)\rangle}=-\otimes K_{X}[dim\,X]$$ and $K_{X}\big|_E=\shO(-h-(2n-1)H)$ (see Lemma \ref{lem:mut}). Thus
\begin{align*}
D(X)=\Big \langle \shA(-h), \shA(H-h), \pi_2^*D(X_2), \langle \shA(kH)\rangle_{0\leq k\leq 2n-2}\Big\rangle.
\end{align*}
Then left mutate $\pi_2^*D(X_2)$ through $\langle \shA(-h), \shA(H-h)\rangle$:
\begin{align}\label{SOD1.1}
D(X)=\Big \langle \shD, \shA(-h), \shA(H-h),\langle \shA(kH)\rangle_{0\leq k\leq 2n-2} \Big\rangle.
\end{align}

\indent To reach (\ref{SOD1mut}), we need to apply mutation through $\langle \shA(-h), \shA(H-h), \shA, \shA(H)\rangle$ which is involved and consists of the four inductive steps. 
\begin{enumerate}
\item \textbf{First Inductive Step ($n\geq 1$)}: Mutation on $\langle \shA(H-h), \shA\rangle$.
\begin{lemma}\label{1ms}
For $1\leq k \leq n$,
\begin{align}\label{sod3.3}
\langle \shA(H-h), \shA\rangle= \Big\langle \shA^{k}(H-h), \shA^{k-1}, \langle \shB_{\ell}\rangle_{n-k \leq \ell\leq n-2}, S^{n-1}U^{\vee}(H-h)\Big\rangle.
\end{align}
\end{lemma}
\begin{proof}
Prove by induction on $k$. 
\begin{itemize}
\item Base case ($k=1$). We can exchange $S^{n-1}U^{\vee}(H-h)$ and  $\shA$ by Lemma \ref{1ind} ($k=0$): $$LHS=\langle \shA^1(H-h), S^{n-1}U^{\vee}(H-h), \shA\rangle=\langle \shA^1(H-h), \shA, S^{n-1}U^{\vee}(H-h)\rangle=RHS.$$ 
\item Assume that we have the SOD (\ref{sod3.3}) for case $k$. Then

\begin{align*}
RHS& = \Big\langle \shA^{k+1}(H-h), \underline{S^{n-k-1}U^{\vee}(H-h), \shA^{k}}, S^{n-k}U^{\vee}, \langle \shB_{\ell}\rangle_{n-k \leq \ell\leq n-2}\Big\rangle\\
&=\Big\langle  \shA^{k+1}(H-h), \shA^{k}, \underline{S^{n-k-1}U^{\vee}(H-h), S^{n-k}U^{\vee}}, \langle \shB_{\ell}\rangle_{n-k \leq \ell\leq n-2}\Big\rangle\\
&=\Big\langle \shA^{k+1}(H-h), \shA^{k}, \langle \shB_{\ell}\rangle_{n-k-1 \leq \ell\leq n-2}\Big\rangle.
\end{align*}
This is just the case $k+1$, and the lemma follows.  In the second line, we exchange $S^{n-k-1}U^{\vee}(H-h)$ and  $\shA^{k}$ by Lemma \ref{van} (\ref{1ind}), and in the last line 
we left mutate $S^{n-k}U^{\vee}$ through $S^{n-k-1}U^{\vee}(H-h)$ (Lemma \ref{mut}). 
\end{itemize}
\end{proof}
Apply Lemma \ref{1ms} for the final case $k=n$:
\begin{align}\label{1mut}
\langle \shA(H-h), \shA\rangle=\Big\langle \shO, \langle \shB_{\ell}\rangle_{0\leq \ell \leq n-2}, S^{n-1}U^{\vee}(H-h)\Big\rangle. 
\end{align}
\begin{remark}
When $n=1$, (\ref{1mut}) is nothing but $$\langle \shO(H-h), \shO\rangle=\langle \shO, \shO(H-h)\rangle.$$
\end{remark}

\item \textbf{Second Inductive Step ($n\geq 2$)}: Mutation on $\Big\langle \shA_1(-h), \shO, \langle \shB_{\ell}\rangle_{0\leq \ell \leq n-2}\Big\rangle$. 
\begin{lemma}\label{2ms}
For any $1\leq k\leq n-1$,
\begin{align}\label{sod3.5}
\Big\langle \shA_1(-h), \shO, \langle \shB_{\ell}\rangle_{0\leq \ell \leq n-2}\Big\rangle=\Big\langle  \langle \shC_{\ell}\rangle_{0\leq \ell \leq k-1}, \shA_{k+1}(-h), \shA^{n-k+1}(H-h),\langle \shB_{\ell}\rangle_{k-1\leq \ell\leq n-2}\Big\rangle.  
\end{align}
\end{lemma}

\begin{proof}
Prove by induction on $k$:
\begin{itemize}
\item Base case $k=1$ is equivalent to $\langle U^{\vee}(-h), \shA_2(-h), \shO\rangle=\langle  \shO, \shO(H-2h),\shA_2(-h)\rangle$, which can be proved by firstly exchange 
$\shO$ and $\shA_2(-h)$ using Lemma \ref{van} (\ref{2ind}) ($k=0$) and right mutate $U^{\vee}(-h)$ through $\shO$ (Lemma \ref{mut}). 
\item Assume we have the SOD (\ref{sod3.5}) for case $k$. Then
\begin{align*}
RHS&=\Big\langle \langle \shC_{\ell}\rangle_{0\leq \ell \leq k-1}, S^{k+1}U^{\vee}(-h), \underline{\shA_{k+2}(-h), \shA^{n-k-1}(H-h), \shO(kh)}, S^{k-1}U^{\vee}(H-h), \langle \shB_{\ell}\rangle_{k\leq \ell\leq n-2}\Big\rangle\\ 
&=\Big\langle \langle \shC_{\ell}\rangle_{0\leq \ell \leq k-1}, \underline{S^{k+1}U^{\vee}(-h), \shO(kh)}, \shA_{k+2}(-h), \shA^{n-k}(H-h), \langle \shB_{l}\rangle_{k\leq \ell\leq n-2}\Big\rangle\\ 
&=\Big\langle \langle \shC_{\ell}\rangle_{0\leq \ell \leq k}, \shA_{k+2}(-h), \shA^{n-k}(H-h), \langle \shB_{\ell}\rangle_{k\leq \ell\leq n-2}\Big\rangle.
\end{align*}
This is the case $k+1$. In the second line we exchange $\shA_{k+2}(-h)$ and $\langle \shA^{n-k-1}(H-h), \shO(kh)\rangle $ by Lemma \ref{van} (\ref{2ind}), and in the third line we right mutate $S^{k+1}U^{\vee}(-h)$ through $\shO(kh)$ (Lemma \ref{mut}). 
\end{itemize}
\end{proof}
Apply Lemma \ref{2ms} to the final case ($k=n-1$):
\begin{align*}
\Big\langle \shA_1(-h), \shO, \langle \shB_{\ell}\rangle_{0\leq \ell \leq n-2}\Big\rangle=\Big\langle  \langle \shC_{\ell}\rangle_{0\leq \ell \leq n-2}, \shA^{2}(H-h),\shB_{n-2}\Big\rangle.  
\end{align*}
\item \textbf{Third Inductive Step ($n\geq 3$):} Mutation on $\Big\langle\langle \shC_{\ell}\rangle_{1\leq \ell \leq n-2}, \shA^{2}(H-h)\Big\rangle.$ 
\begin{lemma}\label{3ms}
For any $1\leq k\leq n-1$,
\begin{align}\label{sod3.6}
\Big\langle\langle \shC_{\ell}\rangle_{1\leq \ell \leq n-2}, \shA^{2}(H-h)\Big\rangle=\Big\langle\langle \shE_{\ell}\rangle_{1\leq \ell\leq k-1}, \langle \shC_{\ell}\rangle_{k\leq \ell \leq n-2}, \shA_{k-1}^2(H-h)\Big\rangle. 
\end{align}
\end{lemma}
\begin{proof}
Prove by induction on $k$
\begin{itemize}
\item  Base case $k=1$ is trivial. 
\item Assume that we have the SOD (\ref{sod3.6}) for the case $k$. Then 
\begin{align*}
RHS&=\Big\langle \langle \shE_{\ell}\rangle_{1\leq \ell\leq k-1}, \shO(kh), S^{k}U^{\vee}(H-2h), \underline{\langle \shC_{\ell}\rangle_{k+1\leq \ell \leq n-2}, S^{k-1}U^{\vee}(H-h)}, \shA_{k}^2(H-h)\Big\rangle\\
&=\Big\langle \langle \shE_{\ell}\rangle_{1\leq \ell\leq k-1}, \shO(kh), \underline{S^{k}U^{\vee}(H-2h), S^{k-1}U^{\vee}(H-h)}, \langle \shC_{\ell}\rangle_{k+1\leq \ell \leq n-2}, \shA_{k}^2(H-h)\Big\rangle\\
&=\Big\langle \langle \shE_{\ell}\rangle_{1\leq \ell\leq k}, \langle \shC_{\ell}\rangle_{k+1\leq \ell \leq n-2}, \shA_{k}^2(H-h)\Big\rangle.
\end{align*}
This is the case $k+1$. In the second line we exchange $\langle \shC_{\ell}\rangle_{k+1\leq \ell \leq n-2}$ and $S^{k-1}U^{\vee}(H-h)$ by Lemma \ref{van} (\ref{3ind}),  and in the third line we right mutate $S^{k}U^{\vee}(H-2h)$ through $S^{k-1}U^{\vee}(H-h)$ (Lemma \ref{mut}).
\end{itemize}
\end{proof}
Apply Lemma \ref{3ms} to the final case ($k=n-1$):
\[
\Big \langle\langle \shC_{\ell}\rangle_{1\leq \ell \leq n-2}, \shA^{2}(H-h)\Big\rangle=\langle \shE_{\ell}\rangle_{1\leq \ell\leq n-2}. 
\]
Lastly, in the third inductive process, we do mutation on $\langle S^{n-1}U^{\vee}(H-h), \shA^1(H)\rangle$ by first exchanging $S^{n-1}U^{\vee}(H-h)$ and $\shA^2(H)$ by Lemma \ref{van} (\ref{4ind}) and then right mutating $S^{n-1}U^{\vee}(H-h)$ through $S^{n-2}U^{\vee}(H)$:
 \begin{align*}
 \langle S^{n-1}U^{\vee}(H-h), \shA^1(H)\rangle=\langle \shA^2(H), \shF_{n-2}\rangle.
 \end{align*}
 
 \item \textbf{Forth Inductive Process ($n\geq 3$)}:Mutation on $\Big\langle \langle \shE_{\ell}\rangle_{1\leq \ell\leq n-2}, \shB_{n-2},\shA^2(H)\Big\rangle$. 
\begin{lemma}\label{4ms}
For any $1\leq k\leq n-2$, 
\begin{align}\label{sod3.7}
&\Big\langle \langle \shE_{\ell}\rangle_{1\leq \ell\leq n-2}, \shB_{n-2}, \shA^2(H)\Big\rangle=\Big\langle\langle \shE_{\ell}\rangle_{1\leq \ell\leq n-1-k}, \langle\shO(\ell H)\rangle_{n-k\leq \ell \leq n-1}, \shA^{k+2}(H), \langle\shF_{\ell}\rangle_{n-2-k\leq \ell\leq n-3}\Big\rangle. 
\end{align}
\end{lemma}
\begin{proof}
Prove by induction on $k$. 
\begin{itemize}
\item Base case $k=1$ is equivalent to
\[
\langle \shB_{n-2}, \shA^2(H)\rangle=\langle  \shO((n-1)h), \shA^3(H), \shF_{n-3}\rangle.
\]
We can exchange $S^{n-2}U^{\vee}(H-h)$ and $\shA^3(H)$ by Lemma \ref{van}(\ref{4ind}) and then right mutate $S^{n-2}U^{\vee}(H-h)$ through $S^{n-3}U^{\vee}(H)$ (Lemma \ref{mut}). That is,  
\begin{align*}
 LHS&=\langle \shO((n-1)h), \shA^3(H), S^{n-2}U^{\vee}(H-h), S^{n-3}U^{\vee}(H)\rangle=RHS.
\end{align*}
 
\item Assume that we have the SOD (\ref{sod3.7}) for the case $k$. Then
\begin{align*}
RHS
&=\Big\langle\langle \shE_{\ell}\rangle_{1\leq \ell\leq n-2-k}, \shO((n-1-k)h), \underline{S^{n-2-k}U^{\vee}(H-h), \shO((n-k)(H-h)-h),} \\
&\underline{ \langle\shO(\ell H)\rangle_{n-k\leq \ell \leq n-1},\shA^{k+3}(H)}, S^{n-k-3}U^{\vee}(H), \langle\shF_{\ell}\rangle_{n-2-k\leq \ell\leq n-3}\Big\rangle\\
&=\Big \langle \shE_{\ell}\rangle_{1\leq \ell\leq n-2-k}, \shO((n-1-k)h), \langle\shO(\ell H)\rangle_{n-k\leq \ell \leq n-1},\shA^{k+3}(H),\\
& \underline{S^{n-2-k}U^{\vee}(H-h), S^{n-k-3}U^{\vee}(H)}, \shO((n-1-k)(H-h)-h),\langle\shF_{\ell}\rangle_{n-2-k\leq \ell\leq n-3}\Big\rangle\\
&=\Big\langle\langle \shE_{\ell}\rangle_{1\leq \ell\leq n-2-k}, \langle\shO(\ell H)\rangle_{n-1-k\leq \ell \leq n-1}, \shA^{k+3}(H), \langle\shF_{\ell}\rangle_{n-3-k\leq \ell\leq n-3}\Big\rangle.  
\end{align*}
This is just the case $k+1$. It is very similar to the argument in the base case above by Lemma \ref{van}(\ref{4ind}) and Lemma \ref{mut}: (i) Exchange $\langle S^{n-2-k}U^{\vee}(H-h), \shO((n-k)(H-h)-h)\rangle$ and $\langle \langle\shO(\ell H)\rangle_{n-k\leq \ell \leq n-1},\shA^{k+3}(H)\rangle$; (ii) Exchange $\shO((n-k)(H-h)-h)$ and $S^{n-k-3}U^{\vee}(H)$; (iii) Right mutate $S^{n-2-k}U^{\vee}(H-h)$ through $S^{n-k-3}U^{\vee}(H)$.
\end{itemize}
\end{proof}
Apply Lemma \ref{4ms} to the final case ($k=n-2$):
\[
\Big\langle \langle \shE_{\ell}\rangle_{1\leq \ell\leq n-2}, \shB_{n-2}, \shA^{2}(H)\Big\rangle= \Big\langle \shE_1, \langle\shO(\ell H)\rangle_{2\leq \ell \leq n-1}, \langle\shF_{\ell}\rangle_{0\leq \ell\leq n-3}\Big\rangle. 
\]
In summary, the outcome of inductive steps 1-4 is
\begin{align}\label{SOD1.2}
\langle \shA(-h), \shA(H-h), \shA, \shA^{1}(H)\rangle=\Big\langle \langle \shO(\ell H)\rangle_{-1\leq \ell\leq n-1},\shH, \langle\shF_{\ell}\rangle_{0\leq \ell\leq n-2}\Big\rangle     \,\, (n\geq 2). 
\end{align}
\end{enumerate}
We will get the SOD (\ref{SOD1mut}) after reorganizing the collections (\ref{SOD1.2}) by Lemma \ref{van} (\ref{ab}).

\end{proof}
\begin{remark}\label{proofn1}
For $N=3$, the inductive steps 2-4 are not needed, and the SOD (\ref{SOD1}) becomes the following:
\begin{align}\label{SOD1s}
 D(X)=\langle \shD, \shO(-h), \shO, \shO(H-h)\rangle. 
 \end{align}
 In this case, we left mutate $\shO(H-h)$ to the far left and left mutate $\shD$ to the far left:
\begin{align}\label{SOD1smut}
 D(X)=\langle \LL_{\shO(-2h)} \shD, \shO(-2h), \shO(-h),\shO\rangle.
 \end{align}
Then Theorem \ref{thm} follows by comparing the SOD (\ref{SOD1smut}) with (\ref{SOD2}) (and actually gives a derived equivalence). 
\end{remark}

\section{Proof of Theorem \ref{thm} by Chess Game Method: $N=2n+1$}
In this section, we use the chess game method developed in \cite{thomas2018notes} and \cite{jiang2021categorical} to prove the theorem \ref{thm}. To achieve this, we need to mutate the SOD (\ref{SOD1mut}) and (\ref{SOD2}) properly. 

\subsection{Mutation of the SOD (\ref{SOD1mut})} \begin{enumerate}
\item Transposition on $\langle \shA(\ell H)\rangle_{n\leq \ell\leq 2n-2}$.
 
For any $0\leq \ell \leq r$, where $r=\left[(n-1)/2\right]$, we can write the SOD of $\shA((n+\ell)H)$ as
\[
\shA((n+\ell)H)=\langle \shA^{2\ell+1}((n+\ell)H), \shA_{n-2\ell-1}((n+\ell)H)\rangle.
\]
By Lemma \ref{van}(\ref{step3}), we can transpose $\langle \shA^{2\ell+1}((n+\ell)H)\rangle_{0\leq \ell\leq r}$ to the far left of $\langle \shA(\ell H)\rangle_{n\leq \ell\leq 2n-2}$ so that we get the following SOD:
\begin{align*}
\langle \shA(\ell H)\rangle_{n\leq \ell\leq 2n-2} =&\Big\langle \langle \shA^{2\ell+1}((n+\ell)H)\rangle_{0\leq \ell\leq r}, \langle \shA_{n-2\ell-1}((n+\ell)H)\rangle_{0\leq \ell\leq r}, \langle \shA(\ell H)\rangle_{n+1+r\leq \ell\leq 2n-2}\rangle\Big\rangle. 
\end{align*}
\item Left mutate $\Big \langle \langle \shA_{n-2\ell-1}((n+\ell)H)\rangle_{0\leq \ell\leq r}, \langle \shA(\ell H)\rangle_{n+1+r\leq \ell\leq 2n-2}\rangle\Big\rangle$
to the far left and then left mutate $\shD$ to the far left:
\begin{align}
D(X)=&\Big \langle \shD_2, \langle \shA_{n-2\ell-1}((-n+1+\ell)H-h)\rangle_{0\leq \ell\leq r}, \langle \shA((\ell-2n+1)H-h)\rangle_{n+1+r\leq \ell\leq 2n-2}, \langle \shO(\ell H)\rangle_{-1\leq \ell\leq n-1},\notag\\
 &\shH, \langle\shF_{\ell}\rangle_{0\leq \ell\leq n-2},S^{n-1}U^{\vee}(H), \langle \shA(kH)\rangle_{2\leq k\leq n-1},\shA^{2\ell+1}((n+\ell)H)\rangle_{0\leq \ell\leq r} \Big\rangle, 
\end{align}
where $\shD_2=\LL_{\Big\langle \langle \shA_{n-2\ell-1}((-n+1+\ell)H-h)\rangle_{0\leq \ell\leq r}, \langle \shA((\ell-2n+1)H-h)\rangle_{n+1+r\leq \ell\leq 2n-2}\Big \rangle}\shD$. 
\end{enumerate}

\subsection{Mutation on the SOD (\ref{SOD2})}

Denote $j_*(p_1^*\shO(ah)\otimes\shO(bH))$ by $\shO(a,b)$ and we use the following SOD for $D(\PP^{2n})$:
\[
D(\PP^{2n})=\langle \shO(kh)\rangle_{-1-n\leq k\leq n-1}\qquad\qquad  (\text{Cf. Figure } \ref{cb1})
\]

Consider the following staircase shape subcategory $\shS_{k}$ $(0\leq k\leq n-2)$ of ${}^{\perp}\pi_1^*D(X_1)$:
\begin{equation*}
\left\langle
\resizebox{\textwidth}{!}{
\begin{tabular}{cccccc}
$\shO(n-1-2k, n+k)$, &$\shO(n-2-2k, n+k)$, &$\shO(n-3-2k, n+k)$, &$\cdots$,   &$\shO(n-2, n+k)$,  &$\shO(n-1, n+k)$\\
                                 &&$\shO(n-3-2k,n+k-1)$, &$\cdots$, &$\shO(n-2, n+k-1)$, &$\shO(n-1, n+k-1)$\\
                               &&$\ddots$ &&$\vdots$ &   $\vdots$\\
                               &&& $\shO(n-3,n+1)$, &$\shO(n-2, n+1)$,  &$\shO(n-1, n+1)$\\
                                                                                                                  &&&&& $\shO(n-1, n)$
\end{tabular}}\right\rangle.
\end{equation*}

Left mutate $\shS_{n-2}$ (blue part in Figure \ref{cb1}) to the far right by applying Lemma \ref{van}(\ref{ab}). 
\begin{align}
&D(X)=\Big\langle \pi_1^*D(X_1), \langle j_*(p_1^*D(\PP^{2n}))(kH)\rangle_{0\leq k \leq n}, \langle\shO(a, n)\rangle_{-1-n\leq a\leq n-2}, \notag\\
&\big \langle \LL_{\shS_{k-1}}\shO(-1-n, n+k),\langle\shO(a, n+k)\rangle_{-n\leq a \leq n-2k-2}\big\rangle_{1\leq k\leq n-2},  \shS_{n-3}\Big\rangle.\label{SOD2mut1}
\end{align}
Note that the red part is $\langle \LL_{\shS_{k-1}}\shO(-1-n, n+k)\rangle_{1\leq k\leq n-2}$ in Figure \ref{cb2}.

Then left mutate $\shS_{n-2}$ to the far left and left mutate $\pi_1^*D(X_1)$ to the far left (Figure \ref{cb2}):
\begin{align}
D(X)=&\Big\langle \shD_1, \shS'_{n-2}, \langle j_*(p_1^*D(\PP^{2n}))(kH)\rangle_{0\leq k \leq n}, \langle\shO(a, n)\rangle_{-1-n\leq a\leq n-2}, \notag\\
&\big \langle \LL_{\shS_{k-1}}\shO(-1-n, n+k),\langle\shO(a, n+k)\rangle_{-n\leq a \leq n-2k-2}\big\rangle_{1\leq k\leq n-2}\Big\rangle,\label{SOD2mutfinal}
\end{align}
where
$$ \shS'=\shS\otimes \shO(-1, 1-2n), \qquad \shD_1=\LL_{\shS'_{n-2}}\pi_1^*D(X_1).$$

\begin{table}[ht!]
\begin{center}
\resizebox{\textwidth}{!}{
\begin{tabular}{C{1cm}|C{1.2cm}|C{1cm}|C{1cm}|C{1cm}|C{1cm}|C{1cm}|C{1cm}|C{1cm}|C{1cm}|C{1cm}|C{1cm}|}
\cline{2-11}
\multicolumn{1}{c|}{$2n-2$} &\cellcolor{red!100}&&&&\cellcolor{blue!100} &\cellcolor{blue!100}&\cellcolor{blue!100}&\cellcolor{blue!100}&\cellcolor{blue!100}&\cellcolor{blue!100}&\cellcolor{blue!100}\\
\cline{2-11}
\multicolumn{1}{c|}{$\vdots$} &\cellcolor{red!100}&&&&&&\cellcolor{blue!100}&\cellcolor{blue!100}&\cellcolor{blue!100}&\cellcolor{blue!100}&\cellcolor{blue!100}\\
\cline{2-11}
$n+1$ &\cellcolor{red!100}&&&&&&&&\cellcolor{blue!100}&\cellcolor{blue!100}&\cellcolor{blue!100}\\
\cline{2-12}
\multicolumn{1}{c|}{$n$} &&&&&&&&&&&\cellcolor{blue!100}\\
\cline{2-12}
\multicolumn{1}{c|}{$\vdots$}&&&&&&&&&&&\\
\cline{2-12}
 
\multicolumn{1}{c|}{1}&&&&&&&&&&&\\
\cline{2-12}
\multicolumn{1}{c|}{0}&&&&&&&&&&&\\
\cline{2-12}
\multicolumn{1}{c}{}&\multicolumn{1}{c}{$-1-n$} & \multicolumn{1}{c}{$-n$} &\multicolumn{1}{c}{$1-n$} & \multicolumn{1}{c}{$2-n$} & \multicolumn{1}{c}{$3-n$} & \multicolumn{1}{c}{$4-n$} & \multicolumn{1}{c}{$\cdots$} &\multicolumn{1}{c}{$n-4$} &\multicolumn{1}{c}{$n-3$}
 &\multicolumn{1}{c}{$n-2$} &\multicolumn{1}{c}{$n-1$}
\end{tabular}}\caption{SOD of ${}^{\perp}\pi_1^*D(X_1)$(\ref{SOD2mut1}).  The horizontal (resp. vertical) direction corresponds to $\shO(h)$ (resp. $\shO(H)$).}\label{cb1}
\end{center}
\end{table}

\begin{table}[ht!]
\begin{center}
\resizebox{\textwidth}{!}{
\begin{tabular}{R{1cm}|C{1.2cm}|C{1cm}|C{1cm}|C{1cm}|C{1cm}|C{1cm}|C{1cm}|C{1cm}|C{1cm}|C{1cm}|C{1cm}|}

\cline{2-5}
\multicolumn{1}{c|}{$2n-2$} &\cellcolor{red!100}&&&&\multicolumn{1}{r}{}&\multicolumn{1}{r}{}&\multicolumn{1}{r}{}&\multicolumn{1}{r}{}&\multicolumn{1}{r}{}&\multicolumn{1}{r}{}
&\multicolumn{1}{c}{} \\
\cline{2-7}
\multicolumn{1}{c|}{$\vdots$} &\cellcolor{red!100}&&&&&&\multicolumn{1}{r}{}&\multicolumn{1}{r}{}&\multicolumn{1}{r}{}&\multicolumn{1}{r}{}&\multicolumn{1}{r}{}\\
\cline{2-9}
$n+1$ &\cellcolor{red!100}&&&&&&&&\multicolumn{1}{r}{}&\multicolumn{1}{r}{}&\multicolumn{1}{r}{}\\
\cline{2-11}
\multicolumn{1}{c|}{$n$} &&&&&&&&&&&\multicolumn{1}{r}{}\\
\cline{2-12}

\multicolumn{1}{c|}{$\vdots$} &&&&&&&&&&&\\
\cline{2-12}
\multicolumn{1}{c|}{$1$} &&&&&&&&&&&\\
\cline{2-12}
\multicolumn{1}{c|}{0}&&&&&&&&&&&\\
\cline{2-12}

\multicolumn{1}{r}{$-1$} &\multicolumn{1}{r}{}&\multicolumn{1}{r}{}
&\multicolumn{1}{c|}{} &\cellcolor{blue!100} &\cellcolor{blue!100}&\cellcolor{blue!100}&\cellcolor{blue!100}&\cellcolor{blue!100}&\cellcolor{blue!100}&\cellcolor{blue!100}&\multicolumn{1}{r}{}\\
\cline{5-10}
\multicolumn{1}{c}{$\vdots$} &\multicolumn{1}{r}{}&\multicolumn{1}{r}{}&\multicolumn{1}{r}{}&\multicolumn{1}{r}{}&\multicolumn{1}{r|}{}&\cellcolor{blue!100}&\cellcolor{blue!100}&\cellcolor{blue!100}&\cellcolor{blue!100}&\cellcolor{blue!100}&\multicolumn{1}{r}{}\\
\cline{7-10}
\multicolumn{1}{r}{$-n+2$} &\multicolumn{1}{r}{}&\multicolumn{1}{r}{}&\multicolumn{1}{r}{}
&\multicolumn{1}{r}{}&\multicolumn{1}{r}{}&\multicolumn{1}{r}{}&\multicolumn{1}{r|}{}&\cellcolor{blue!100}&\cellcolor{blue!100}&\cellcolor{blue!100}&\multicolumn{1}{r}{}\\
\cline{9-10}
\multicolumn{1}{r}{$-n+1$} &\multicolumn{1}{r}{}&\multicolumn{1}{r}{}
&\multicolumn{1}{r}{}&\multicolumn{1}{r}{}&\multicolumn{1}{r}{}&\multicolumn{1}{r}{}&\multicolumn{1}{r}{}
&\multicolumn{1}{r}{}&\multicolumn{1}{r|}{}&\cellcolor{blue!100}&\multicolumn{1}{r}{}\\
\multicolumn{1}{c}{}&\multicolumn{1}{c}{$-1-n$} & \multicolumn{1}{c}{$-n$} & 
\multicolumn{1}{c}{$1-n$} & \multicolumn{1}{c}{$2-n$} & \multicolumn{1}{c}{$3-n$} & \multicolumn{1}{c}{$4-n$} & \multicolumn{1}{c}{$\cdots$} &\multicolumn{1}{c}{$n-4$} &\multicolumn{1}{c}{$n-3$}
 &\multicolumn{1}{c}{$n-2$} &\multicolumn{1}{c}{$n-1$}
\end{tabular}}\caption{Mutated Chessboard (SOD of ${}^{\perp}\shD_1$ (\ref{SOD2mutfinal}))}\label{cb2}
\end{center}
\end{table}

\subsection{Proof of Theorem \ref{thm}}

To prove that there is a fully-faithful embedding from $D(X_2)$ into $D(X_1)$, it is equivalent to show
\begin{center}
\begin{tikzcd}
\phi: \shD_1 \arrow[r, "i_{\shD_1}"] & D(X) \arrow [r, "\pi_{\shD_2}"]  & \shD_2
\end{tikzcd}
 \end{center}
is fully-faithful, where $i_{\shD_1}$ is the natural embedding and $\pi_{\shD_2}$ is left adjoint to the natural embedding from $\shD_2$ into $D(X)$. That is, for any $x,y \in \shD_1$, 
$$ Hom_{\shD_2}(\phi(x), \phi(y))=Hom_{\shD_1}(x,y).$$ 
By adjunction, 
\begin{align*}
Hom(\phi(x), \phi(y))=Hom(\pi_{\shD_2}i_{\shD_1} x, \pi_{\shD_2}i_{\shD_1}y)=Hom(i_{\shD_1} x, i_{\shD_2} \pi_{\shD_2}i_{\shD_1}y).
\end{align*}
So it is sufficient to show that 
\begin{equation}\label{cone}
 Cone(y\rightarrow \pi_{\shD_2}y)=0. 
\end{equation}
Note that $\pi_{\shD_2}=\LL_{{}^{\perp}\shD_2}$ and to achieve (\ref{cone}), we will show that $Hom(^{{}\perp}\shD_2,y)=0$, which is equivalent to say that $^{\perp}\shD_2$ lies in the whole region in Figure \ref{cb2}. Actually, we will show that $^{\perp}\shD_2$ lies in the subregion consisting of the union of two parts:
\begin{enumerate}
\item[(i)]
$$ \begin{cases}
 x+2y\leq 3n-2;\\
 0\leq y\leq 2n-2;\\
 -n\leq x\leq n-1;
\end{cases}$$

\item[(ii)]$$\begin{cases}
-n\leq x+2y\leq 3n-4;\\
-n\leq x\leq n-2.
\end{cases}$$ 
\end{enumerate}
\begin{proposition}
For any integer $k$,
\begin{align*}
S^kU^{\vee}\in \langle \shO(k-2\ell, \ell)\rangle_{0\leq \ell\leq k}
\end{align*}
\end{proposition}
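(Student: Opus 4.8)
The plan is to induct on $k$ using the symmetric-power Euler sequence (\ref{euler}) on $E$, exploiting that the statement only asserts membership in the triangulated subcategory generated by the listed line bundles (no orthogonality or ordering is being claimed). First I would dispose of the base case $k=0$: since $S^0U^{\vee}=\shO=\shO(0,0)$, the claim is immediate.

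For the inductive step I would invoke the first short exact sequence of (\ref{euler}),
\[
0 \longrightarrow S^{k-1}U^{\vee}(H-h) \longrightarrow S^kU^{\vee} \longrightarrow \shO(kh) \longrightarrow 0,
\]
which yields a distinguished triangle and therefore places $S^kU^{\vee}$ in the triangulated subcategory generated by its two outer terms. The quotient term $\shO(kh)$ is exactly $\shO(k,0)$, the $\ell=0$ member of the target collection. For the subobject term, the inductive hypothesis gives $S^{k-1}U^{\vee}\in\langle\shO(k-1-2\ell,\ell)\rangle_{0\le\ell\le k-1}$; twisting by the line bundle $\shO(H-h)$ (an exact operation sending each generator $\shO((k-1-2\ell)h+\ell H)$ to $\shO((k-2-2\ell)h+(\ell+1)H)=\shO(k-2(\ell+1),\ell+1)$) yields
\[
S^{k-1}U^{\vee}(H-h)\in\langle\shO(k-2m,m)\rangle_{1\le m\le k}.
\]
Combining the contributions of the two outer terms, $S^kU^{\vee}\in\langle\shO(k-2\ell,\ell)\rangle_{0\le\ell\le k}$, which is precisely the case $k$ and completes the induction. (Equivalently, one could take $S^k$ of the rank-$2$ relative Euler sequence directly, obtaining a $(k+1)$-step filtration of $S^kU^{\vee}$ whose graded pieces $\shO(H-h)^{\otimes\ell}\otimes\shO(h)^{\otimes(k-\ell)}=\shO((k-2\ell)h+\ell H)$ are exactly the $\shO(k-2\ell,\ell)$.)

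The argument is routine; the only point to watch is the index bookkeeping in the twist by $\shO(H-h)$, which lowers the $h$-degree by one and raises the $H$-degree by one, so that after re-indexing $m=\ell+1$ the generators coming from the inductive hypothesis fill exactly the slots $1\le m\le k$, leaving the slot $\ell=0$ for $\shO(kh)$. I do not expect a genuine obstacle here: unlike the vanishing statements in Lemma \ref{van}, this proposition needs neither Bott--Borel--Weil vanishing nor the full exceptional collection on $Gr(2,N)$, only the exactness of $j_*$ and of twisting by a line bundle, which let the sequences be transported freely between $E$ and $X$.
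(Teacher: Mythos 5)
Your proof is correct and is essentially the paper's own argument: induction on $k$ with a trivial base case, twisting the inductive hypothesis by $\shO(H-h)$ to re-index the generators into the slots $1\le \ell\le k$, and then using the symmetric-power Euler sequence (\ref{euler}) to place $S^kU^{\vee}$ in the subcategory generated by $S^{k-1}U^{\vee}(H-h)$ and $\shO(kh)$. The only cosmetic differences are that the paper phrases the step as passing from $k$ to $k+1$ and notes in passing (via Lemma \ref{van}(\ref{ab})) that the collection is exceptional, which, as you correctly observe, is not needed for the membership claim itself.
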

\begin{proof}
We prove by induction on $k$. Note that the RHS forms an exceptional collection by Lemma \ref{van}(\ref{ab}). 
\begin{enumerate}
\item Base case $k=0$ is trivial. 
\item Assume that we have $S^kU^{\vee}\in \langle \shO(k-2\ell, l)\rangle_{0\leq \ell\leq k}$, and thus 
$$S^kU^{\vee}(H-h)\in \langle \shO(k-2\ell-1, \ell+1)\rangle_{0\leq \ell\leq k}= \langle \shO(k-2\ell, \ell)\rangle_{1\leq \ell\leq k+1}.$$ By short exact sequence (\ref{euler}), $S^{k+1}U^{\vee}\in \langle \shO(k-2\ell, \ell)\rangle_{0\leq \ell\leq k+1}$. 
\end{enumerate}
\end{proof}

\begin{remark}\
We can view that $S^kU^{\vee}$ lies in the \qq{segment} $\{x+2y=k , 0\leq y\leq k, -k\leq x\leq k\}$. Hence, we have the following ($a, b\in\ZZ$) :
\begin{enumerate}
\item[(i)] $S^aU^{\vee}(bH)$ lies in $\{x+2y=a+2b , b\leq y\leq a+b, -a\leq x\leq a\}$;
\item[(ii)] $S^aU^{\vee}(bH-h)$ lies in $\{x+2y=a+2b-1 , b\leq y\leq a+b, -a-1\leq x\leq a-1\}.$
\end{enumerate}
\end{remark}

Now we divide the SOD components of ${}^{\perp}\shD_2$ into two classes: {}

%

\begin{enumerate}
\item $ \Big \langle \langle \shA_{n-2\ell-1}((-n+1+\ell)H-h)\rangle_{0\leq \ell\leq r}, \langle \shA((\ell-2n+1)H-h)\rangle_{n+1+r\leq \ell\leq 2n-2}\Big\rangle$ and

\item $\Big \langle \langle \shO(\ell H)\rangle_{-1\leq \ell\leq n-1}, \shH, 
\langle\shF_{\ell}\rangle_{0\leq \ell\leq n-2},S^{n-1}U^{\vee}(H), \langle \shA(kH)\rangle_{2\leq k\leq n-1}, \langle \shA^{2\ell+1}((n+\ell)H)\rangle_{0\leq \ell\leq r} \Big\rangle$. 

\end{enumerate}

\begin{claim}
The SOD components of Classes (1) (resp. (2)) lie in region (1) (resp. (2)). 
\end{claim}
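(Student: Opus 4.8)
The plan is to pass to the $(x,y)$-coordinates of Figures \ref{cb1}--\ref{cb2}, in which $x$ records the power of $h$ and $y$ the power of $H$, and to reduce the claim to a finite list of linear inequalities. The key structural observation is that, after the mutation producing (\ref{SOD2mutfinal}), the cells drawn in Figure \ref{cb2} are a generating set for ${}^{\perp}\shD_1$, so $\Hom({}^{\perp}\shD_1, y)=0$ for every $y\in\shD_1$. Hence it suffices to show that each generator of ${}^{\perp}\shD_2$ — i.e. each object in Group (1) and Group (2) — lies in the triangulated subcategory generated by the $\shO(a,b)$ occupying that region. By the preceding Proposition and Remark, each such generator is built by iterated cones from the line bundles $\shO(a,b)$ indexed by its \emph{support}: a single point $(a,b)$ for a line bundle $\shO(ah+bH)$, and a segment on a line $x+2y=\text{const}$ with explicit $x$- and $y$-ranges for the twists $S^aU^{\vee}(bH)$ and $S^aU^{\vee}(bH-h)$. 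Since the two regions of the statement both sit inside the region of Figure \ref{cb2}, it is enough to check that each support segment lies in the appropriate one.

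First I would treat Group (1), placing it in the region with no lower bound on $y$, namely $-n\le x+2y\le 3n-4$ and $-n\le x\le n-2$; this region is forced because these objects genuinely reach negative $y$. Substituting $b=-n+1+\ell$ into the Remark, the generator $S^aU^{\vee}(bH-h)$ of $\shA_{n-2\ell-1}((-n+1+\ell)H-h)$ lies on $x+2y=a-2n+2\ell+1$ with $-a-1\le x\le a-1$. Letting $a$ run over $n-2\ell-1\le a\le n-1$ and $0\le\ell\le r$ yields $-n\le x+2y\le -1$ and $-n\le x\le n-2$, the minimal value $x+2y=-n$ being attained exactly at $a=n-2\ell-1$. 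The family $\shA((\ell-2n+1)H-h)$ with $n+1+r\le\ell\le 2n-2$ is handled identically, the one delicate estimate being $x+2y\ge -n$, which reduces to $2r\ge n-2$ and so holds because $n-2\le 2r\le n-1$.

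Next I would treat Group (2), placing it in the region with $x+2y\le 3n-2$, $0\le y\le 2n-2$ and $-n\le x\le n-1$. The bundles $\shO(\ell H)$ sit at $(0,\ell)$ and are immediate for $0\le\ell\le n-1$; the single term $\shO(-H)$, at $(0,-1)$, has $y=-1$ and therefore belongs to the other region, on the common boundary of the two — harmless, since only the union of the regions is needed. The points of $\shH$, the two or three objects of each $\shF_\ell$ (splitting on $\ell\le n-4$ versus $\ell>n-4$), and the objects $S^{n-1}U^{\vee}(H)$ and $\shA(kH)$ for $2\le k\le n-1$ are single points or short segments whose inequalities are routine to verify. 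The genuinely tight family is $\shA^{2\ell+1}((n+\ell)H)$: its top object $S^{n-2\ell-2}U^{\vee}((n+\ell)H)$ lies on the line $x+2y=3n-2$, exactly the upper edge of the region, for every $\ell\le r$. This is precisely where the transposition via Lemma \ref{van}(\ref{step3}) that produced the splitting $\shA((n+\ell)H)=\langle\shA^{2\ell+1}((n+\ell)H),\shA_{n-2\ell-1}((n+\ell)H)\rangle$ pays off.

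The main difficulty is bookkeeping rather than anything conceptual: one must bound an entire support segment, not merely a single point, which forces one to track the extreme values of the running parameters ($\ell=0$ or $\ell=r$, $a=n-1$ or $a=n-2\ell-2$) at which the inequalities become equalities, while simultaneously juggling the piecewise definitions of $\shF_\ell$ and $\shH$ and the arithmetic of $r=\left[(n-1)/2\right]$. Once every family has been placed in its region, the inclusion of both regions into the region of Figure \ref{cb2} gives $\Hom({}^{\perp}\shD_2, y)=0$ for all $y\in\shD_1$, which is exactly the vanishing (\ref{cone}) that completes the proof of Theorem \ref{thm}.
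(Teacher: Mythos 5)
Your proof is correct and follows essentially the same route as the paper's: describe each generator of ${}^{\perp}\shD_2$ as a point or support segment in the $(x,y)$-chessboard via the Proposition and the Remark, then verify the defining linear inequalities of the two regions family by family. You are in fact more thorough than the paper, which only checks the family $\langle \shA_{n-2\ell-1}((-n+1+\ell)H-h)\rangle_{0\leq \ell\leq r}$ and dismisses the rest as ``the same arguments''; your observations that Group (1) must go to the region without the lower bound on $y$ (the paper's own inequalities confirm this, despite the Claim's labeling) and that $\shO(-H)$ lies only in the union of the two regions rather than in the region with $0\leq y$ are correct and repair small imprecisions in the paper's write-up.
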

\begin{proof}
For $S^aU^{\vee}(bH-h)\in \langle \shA_{n-2\ell-1}((-n+1+\ell)H-h)\rangle_{0\leq \ell\leq r}$, it is easy to check that  
$$ -n\leq a+2b-1\leq 3n-4,\quad -n\leq a-1\leq n-2, \quad -a-1\geq -n$$ 
under the conditions:
$$ b=n-2\ell-1, \quad 0\leq a \leq n-1, \quad 0\leq \ell\leq r.$$ 
Hence, $\langle \shA_{n-2\ell-1}((-n+1+\ell)H-h)\rangle_{0\leq \ell\leq r}$ lies in region (1). The other cases can be proved by the same arguments. 
\end{proof}
Therefore (\ref{cone}) holds and the fully faithful functor $\Phi$ is given by the composition of the following functors
\begin{equation}
D(X_1) \xrightarrow{\LL_{\shS'_{n-2}}\pi_1^*}   \shD_1 \xrightarrow{\phi}  \shD_2 \xrightarrow{\RR_{\Big\langle \langle \shA_{n-2\ell-1}((-n+1+\ell)H-h)\rangle_{0\leq \ell\leq r}, \langle \shA((\ell-2n+1)H-h)\rangle_{n+1+r\leq \ell\leq 2n-2}\Big \rangle}}\shD \xrightarrow{\pi_{2*} \RR_{\langle\shA(-h), \shA(H-h)\rangle}} D(X_2).
\end{equation}\label{functor}

\section{Even dimensional case: \texorpdfstring{$N=2n\, (n\geq 2)$}{}} \label{even}
We sketch the process to prove even dimensional cases as follows: 
\begin{enumerate}
\item Step 1: Left mutate $\shA^1((2n-2)H), \shA^1((2n-1)H)$ to the far left, and then left mutate $\pi_2^*D(X_2)$ to the far
left:
\begin{align*}
D(X)=\Big\langle \shD', \shA^1(-h), \shA^1(H-h), \langle \shA(\ell H)\rangle_{0\leq \ell\leq n-1}, \langle \shA^1(kH)\rangle_{n\leq k\leq 2n-3}\Big \rangle,
\end{align*}
where
\[
\shD'=\LL_{\langle \shA^1(-h), \shA^1(H-h)\rangle}\pi_2^*D(X_2).
\]
\item Step 2: Mutations on $\langle \shA^1(-h), \shA^1(H-h), \shA, \shA(H)\rangle$.
\begin{align*}
\langle \shA(-h), \shA(H-h), \shA, \shA(H)\rangle
=\Big \langle \langle\shO(\ell H)\rangle_{-1\leq \ell\leq n-1},\shH', \langle \shF'_{\ell} \rangle_{0\leq k\leq n-3}, S^{n-2}U^{\vee}(H), S^{n-1}U^{\vee}(H)\Big\rangle.
\end{align*}
Here  
\begin{align*}
&\shH'=\begin{cases} 
 \langle  \shO(H-h) \rangle &\mbox{if } n=2;\\
\langle \shO(H-2h), \shO(H-h)\rangle & \mbox{if } n=3;\\
\langle \shO(H-2h), \shO(H-h), \shO(2H-3h)\rangle &\mbox{if } n\geq 4;
 \end{cases}\\
&\shF_{\ell}'=\begin{cases} 
\langle S^{\ell}U^{\vee}(H), \shO((\ell+2)(H-h)),\shO((l+3)(H-h)-h)\rangle &\mbox{if } \ell\leq n-5;\\
\langle S^{\ell}U^{\vee}(H), \shO((\ell+2)(H-h))\rangle &\mbox{if } \ell>n-5.
\end{cases}
\end{align*}

\item Step 3: Mutation on $\langle\shA^1(\ell H)\rangle_{n\leq \ell\leq 2n-3}$.

Let $r'=\left[(n-1)/2\right]-1$, and consider the following SOD for $\shA(aH)$, where $2n-2-r'\leq a \leq 2n-2$,
\begin{align*}
\shA^1((n+\ell)H)=\langle \shA^{2\ell+3}((n+\ell)H), \shA^1_{n-2\ell-3}((n+\ell)H)\rangle. 
\end{align*}

Next left mutate $\Big \langle S^{n-1}U^{\vee}((n-1)H), \langle \shA^1_{n-2\ell-3}((n+\ell)H)\rangle_{0\leq \ell \leq r'}\Big \rangle$ to the far left, and then left mutate $\shD'$ to the far left:
\begin{align*}
&D(X)=\Big \langle \shD'_2, S^{n-1}U^{\vee}(-H-h), \langle\shA^1_{n-2\ell-3}(-(\ell+2)H-h)\rangle_{0\leq \ell\leq r'}, \langle \shA^1((\ell-2n+3)H-h)\rangle_{n+r'+1\leq \ell\leq 2n-3},\\
& \langle\shO(\ell H)\rangle_{-1\leq \ell\leq n-1}, \shH',\langle \shF'_{\ell} \rangle_{0\leq \ell\leq n-3}, S^{n-2}U^{\vee}(H), S^{n-1}U^{\vee}(H), \langle\shA(\ell H)\rangle_{2\leq \ell\leq n-2},\langle\shA^{2\ell-3}((n+\ell)H)\rangle_{-1\leq \ell\leq r'}\Big\rangle, 
\end{align*}
where
\[
\shD_2'=\LL_{\Big \langle S^{n-1}U^{\vee}((n-1)H), \langle \shA^1_{n-2\ell-3}((n+\ell)H)\rangle_{0\leq \ell \leq r'}\Big \rangle}\shD'.
\]

\item Step 4: Mutation on the SOD (\ref{SOD2}) of $D(X)$.

Mutate blue part to the far left, and mutate $D(X_1)$ to the far left (Figure \ref{cb3}). 

\item Step 5: Conclude the main theorem via analyzing ${}^{\perp}\shD'_2$.  
\end{enumerate}

\begin{table}[h!]
\begin{center}
\resizebox{\textwidth}{!}{
\begin{tabular}{R{1cm}|C{1cm}|C{1cm}|C{1cm}|C{1cm}|C{1cm}|C{1cm}|C{1cm}|C{1cm}|C{1cm}|C{1cm}|}

\cline{2-4}
\multicolumn{1}{c|}{$2n-3$} &\cellcolor{red!100}&&&\multicolumn{1}{r}{}&\multicolumn{1}{r}{}&\multicolumn{1}{r}{}&\multicolumn{1}{r}{}&\multicolumn{1}{r}{}&\multicolumn{1}{r}{}
&\multicolumn{1}{c}{} \\
\cline{2-6}
\multicolumn{1}{c|}{$\vdots$} &\cellcolor{red!100}&&&&&\multicolumn{1}{r}{}&\multicolumn{1}{r}{}&\multicolumn{1}{r}{}&\multicolumn{1}{r}{}&\multicolumn{1}{r}{}\\
\cline{2-8}
$n+1$ &\cellcolor{red!100}&&&&&&&\multicolumn{1}{r}{}&\multicolumn{1}{r}{}&\multicolumn{1}{r}{}\\
\cline{2-10}
\multicolumn{1}{c|}{$n$} &&&&&&&&&&\multicolumn{1}{r}{}\\
\cline{2-11}

\multicolumn{1}{c|}{$\vdots$} &&&&&&&&&&\\
\cline{2-11}
\multicolumn{1}{c|}{$1$} &&&&&&&&&&\\
\cline{2-11}
\multicolumn{1}{c|}{0}&&&&&&&&&&\\
\cline{2-11}

\multicolumn{1}{r}{$-1$} &\multicolumn{1}{r}{}
&\multicolumn{1}{c|}{} &\cellcolor{blue!100} &\cellcolor{blue!100}&\cellcolor{blue!100}&\cellcolor{blue!100}&\cellcolor{blue!100}&\cellcolor{blue!100}&\cellcolor{blue!100}&\multicolumn{1}{r}{}\\
\cline{5-10}
\multicolumn{1}{c}{$\vdots$} &\multicolumn{1}{r}{}&\multicolumn{1}{r}{}&\multicolumn{1}{r}{}&\multicolumn{1}{r|}{}&\cellcolor{blue!100}&\cellcolor{blue!100}&\cellcolor{blue!100}&\cellcolor{blue!100}&\cellcolor{blue!100}&\multicolumn{1}{r}{}\\
\cline{7-10}
\multicolumn{1}{r}{$-n+2$} &\multicolumn{1}{r}{}&\multicolumn{1}{r}{}
&\multicolumn{1}{r}{}&\multicolumn{1}{r}{}&\multicolumn{1}{r}{}&\multicolumn{1}{r|}{}&\cellcolor{blue!100}&\cellcolor{blue!100}&\cellcolor{blue!100}&\multicolumn{1}{r}{}\\
\cline{9-10}
\multicolumn{1}{r}{$-n+1$} &\multicolumn{1}{r}{}
&\multicolumn{1}{r}{}&\multicolumn{1}{r}{}&\multicolumn{1}{r}{}&\multicolumn{1}{r}{}&\multicolumn{1}{r}{}
&\multicolumn{1}{r}{}&\multicolumn{1}{r|}{}&\cellcolor{blue!100}&\multicolumn{1}{r}{}\\
\multicolumn{1}{c}{}& \multicolumn{1}{c}{$-n$} & 
\multicolumn{1}{c}{$1-n$} & \multicolumn{1}{c}{$2-n$} & \multicolumn{1}{c}{$3-n$} & \multicolumn{1}{c}{$4-n$} & \multicolumn{1}{c}{$\cdots$} &\multicolumn{1}{c}{$n-4$} &\multicolumn{1}{c}{$n-3$}
 &\multicolumn{1}{c}{$n-2$} &\multicolumn{1}{c}{$n-1$}
\end{tabular}}\caption{Mutated Chessboard (${}^{\perp}\shD_1'$)}\label{cb3}
\end{center}
\end{table}

\begin{remark}
It should be noted that when $n=2, N=4$,  there is no $U^{\vee}(H)$ in the final SOD of $D(X)$ since we have mutated it to the far left at the beginning of step 3:  
$$ D(X)=\langle \LL_{U^{\vee}(-H-h)}\shD', U^{\vee}(-H-h), \shO(-h), \shO, \shO(h), \shO(H-h), \shO(H)\rangle. $$
\end{remark}

\clearpage

\appendix

\renewcommand\thesubsection{A\arabic{subsection}}
\renewcommand\thetheorem{A\arabic{theorem}}

\section{Background on mutations and Borel-Weil-Bott}

\subsection{Semiorthogonal decompositions and mutations} \label{appa}
A \emph{semiorthogonal decomposition} (SOD) of a triangulated category $\shT$, written as:
	\begin{equation} \label{SOD:T}
	\shT = \langle \shT_1, \shT_2, \ldots, \shT_{n} \rangle,
	\end{equation}
	 is formed by a sequence of full triangulated subcategories $\shT_1, \ldots, \shT_n$ of $\shT$ such that
	\begin{enumerate}
	        \item the natural inclusion functor $\iota_{\shT_i}: \shT_i \hookrightarrow \shT$ admits both right and left adjoints. 
		\item $\Hom_\shT (t_k ,t_\ell) = 0$ for all $t_k \in \shT_k$ and $t_\ell \in \shT_\ell$, if $k > \ell$, and

		\item $\shT_1, \shT_2, \ldots, \shT_{n}$ generates $\shT$, \emph{i.e., } the smallest triangulated category containing $\shT_1, \shT_2, \ldots, \shT_{n}$ that is closed under shifting and taking cones. 
\end{enumerate}

The subcategory $\shT_i$ satisfying the condition (1) is called \emph{admissible}.  A sequence $\shT_1, \ldots, \shT_n$ satisfying the conditions (1)\,\&\,(2) is called \emph{a semiorthogonal collection}. 
An object $E$ in $\shT$ is called \emph{exceptional} if $\Hom(E, E)\cong k$ and $\Hom(E, E[i])=0$ for each non-zero integer $i$. The SOD (\ref{SOD:T}) is called \emph{a full exceptional collection} of $\shT$ if each $\shT_i$ is generated by an exceptional object.   

Suppose $\shT'$ is an admissible subcategory of a triangulated category $\shT$. Then denote 
	$$\shT'^\perp := \{ T \in \shT \mid \Hom(\shT',T) = 0\}, \qquad {}^\perp \shT' :=\{ T \in \shT \mid \Hom(T, \shA) = 0\}$$ 
to be the {\em right} and respectively {\em left orthogonal} of $\shT'$ inside $\shT$. $\shT'^\perp$ and ${}^\perp \shT'$ are both admissible, and we have the SOD $\shT = \langle \shT'^\perp, \shT' \rangle =  \langle \shT', {}^\perp \shT'\rangle$.

Starting with a SOD, one can obtain a whole collection of new decompositions by \emph{mutations}.  Let $\shT'$ be an admissible subcategory of a triangulated category $\shT$. Then the functor $\LL_{\shT'}: = i_{\shT'^\perp} i^*_{\shT'^{\perp}} : \shT \to \shT$  (resp. $\RR_{\shT'} : =  i_{{}^\perp \shT'} i^!_{ {}^\perp \shT'}: \shT \to \shT$) is called the \emph{left} (resp. {\em right}) {\em mutation through $\shT'$}, where $ i^*_{\shT'^{\perp}}$ (resp.  $i^!_{ {}^\perp \shT'}$) is the left (resp. right) adjoint functor to the inclusion $i_{\shT'^\perp}:  \shT'^\perp \hookrightarrow \shT$. The following results are standard, see  \cite{kuznetsov2010derived}, \cite{bondal1989representation} and \cite{kuznetsov2007homological}. 
\begin{lemma} \label{lem:mut} Let $\shT'$ and $\shT_1, \ldots, \shT_n$ be admissible subcategories of a triangulated category $\shT$ where $n \ge 2$ is an integer. 
\begin{enumerate}
	\item For any $b \in \shT$, there are distinguished triangles
		$$ i_{\shT'} i^!_{\shT'} (b) \to b \to \LL_{\shT'} \,b \xrightarrow{[1]}{},\qquad  \RR_{\shT'} \,b \to b \to  i_{\shT'} i^*_{\shT'} (b)  \xrightarrow{[1]}{}.$$
		
	\item $(\LL_{\shT'})\,|_{\shT'} = 0$ and $(\RR_{\shT'})\,|_{\shT'} = 0$ are the zero functors, and $(\LL_{\shT'})\,|_{\shT'^\perp} = \Id_{\shT'^\perp}: \shT'^\perp \to \shT'^\perp$, $(\RR_{\shT'})\,|_{{}^\perp \shT'} = \Id_{{}^\perp \shT'}: {}^\perp \shT' \to {}^\perp \shT'$ are identity functors. Furthermore $(\LL_{\shT'})\,|_{{}^\perp \shT'} : {}^\perp \shT' \to \shT'^\perp$ and $(\RR_{\shT'})\,|_{\shT'^\perp } : \shT'^\perp \to {}^\perp \shT' $ are mutually inverse equivalences of categories.
	\item If $\shT$ admits a Serre functor $S$, then  $\LL_{\shT'}\,|_{{}^\perp\shT'}=S_{\shT}\,|_{{}^\perp\shT'}$ and $\RR_{\shT'}\,|_{\shT'^{\perp}}=S^{-1}_{\shT}\,|_{\shT'^{\perp}}$. 

	\item \label{lem:mut:span} If $\shT=\langle\shT_1, \ldots, \shT_{k-1}, \shT_k, \shT_{k+1}, \ldots, \shT_n\rangle$, then 
		 \begin{align*}
		\langle \shT_1, \ldots, \shT_{k-1}, \shT_k, \shT_{k+1}, \ldots, \shT_n\rangle& = \langle \shT_1, \ldots, \shT_{k-2}, \LL_{\shT_{k-1}} (\shT_k), \shT_{k-1}, \shT_{k+1}, \ldots, \shT_n \rangle\\
		&=\langle \shT_1, \ldots, \shT_{k-1}, \shT_{k}, \RR_{\shT_{k}} (\shT_{k-1}), \shT_{k-1}, \shT_{k+1}, \ldots, \shT_n\rangle.
		\end{align*}

\end{enumerate}
\end{lemma}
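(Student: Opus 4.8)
The plan is to run everything off the two canonical decompositions $\shT=\langle\shT'^\perp,\shT'\rangle=\langle\shT',{}^\perp\shT'\rangle$ together with the units and counits of the triple adjunction $i^*_{\shT'}\dashv i_{\shT'}\dashv i^!_{\shT'}$. For part (1) I would complete the counit $i_{\shT'}i^!_{\shT'}(b)\to b$ to a distinguished triangle $i_{\shT'}i^!_{\shT'}(b)\to b\to C\xrightarrow{[1]}{}$; applying $\Hom(i_{\shT'}(s),-)$ for $s\in\shT'$ and using $i^!_{\shT'}i_{\shT'}\cong\Id$ (full faithfulness of the inclusion) shows the first arrow induces an isomorphism on these Hom-groups, whence $C\in\shT'^\perp$ and, by uniqueness of the decomposition in $\langle\shT'^\perp,\shT'\rangle$, $C=\LL_{\shT'}(b)$. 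The second triangle is obtained dually from the unit $b\to i_{\shT'}i^*_{\shT'}(b)$ and the decomposition $\langle\shT',{}^\perp\shT'\rangle$.

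For part (2) I would substitute distinguished objects into these triangles. If $b\in\shT'$ then $i^!_{\shT'}(b)\cong b$ and the counit is an isomorphism, so $\LL_{\shT'}(b)=0$; if $b\in\shT'^\perp$ then $i^!_{\shT'}(b)=0$ (it corepresents $\Hom(i_{\shT'}(-),b)\equiv 0$) and $b\xrightarrow{\sim}\LL_{\shT'}(b)$. The inverse-equivalence claim I would get cheaply from exactness of the mutation functors: for $b\in{}^\perp\shT'$ the triangle of (1) reads $i_{\shT'}i^!_{\shT'}(b)\to b\to\LL_{\shT'}(b)$ with left term in $\shT'$, and applying $\RR_{\shT'}$, which annihilates $\shT'$ and restricts to the identity on ${}^\perp\shT'$, gives $\RR_{\shT'}\LL_{\shT'}(b)\cong\RR_{\shT'}(b)\cong b$; the symmetric computation yields $\LL_{\shT'}\RR_{\shT'}\cong\Id$ on $\shT'^\perp$.

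Part (3) is where I expect the genuine work. First I would record the structural fact that the Serre functor exchanges the orthogonals: for $b\in{}^\perp\shT'$ and $s\in\shT'$, Serre duality gives $\Hom(s,S_\shT b)\cong\Hom(b,s)^\vee=0$, so $S_\shT b\in\shT'^\perp$, and symmetrically $S_\shT^{-1}(\shT'^\perp)\subseteq{}^\perp\shT'$. To upgrade this to an equality of functors $\LL_{\shT'}|_{{}^\perp\shT'}=S_\shT|_{{}^\perp\shT'}$ I would test against an arbitrary $t\in\shT$: decomposing $t$ along $\langle\shT'^\perp,\shT'\rangle$ and using the vanishing $\Hom(\shT',\shT'^\perp)=0$ together with Serre duality on both factors identifies $\Hom(t,\LL_{\shT'}b)$ with $\Hom(t,S_\shT b)$ naturally, so $\LL_{\shT'}b\cong S_\shT b$ by Yoneda. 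The delicate point is the compatibility of the connecting maps under Serre duality; this is the one place the argument is not purely formal, and I would lean on \cite{bondal1989representation} for the precise bookkeeping. The statement $\RR_{\shT'}|_{\shT'^\perp}=S_\shT^{-1}|_{\shT'^\perp}$ then follows by the same argument or by inverting (2).

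Finally, for part (4) I would localize to the adjacent pair. Writing $\shT=\langle\shU,\shT_{k-1},\shT_k,\shV\rangle$ with $\shU=\langle\shT_1,\dots,\shT_{k-2}\rangle$ and $\shV=\langle\shT_{k+1},\dots,\shT_n\rangle$, it suffices to verify $\langle\shT_{k-1},\shT_k\rangle=\langle\LL_{\shT_{k-1}}\shT_k,\shT_{k-1}\rangle$ inside the admissible subcategory they span. By construction $\LL_{\shT_{k-1}}\shT_k\subseteq\shT_{k-1}^\perp$, so the pair is semiorthogonal in the new order, and by (2) the mutation restricts to an equivalence $\shT_k\xrightarrow{\sim}\LL_{\shT_{k-1}}\shT_k$, so generation is unchanged; the Hom-vanishing against the untouched blocks $\shU,\shV$ is inherited because the mutation only moves objects within $\langle\shT_{k-1},\shT_k\rangle$. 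Reassembling gives the first displayed identity, and the right-mutation identity is proved identically. Throughout I would cite \cite{bondal1989representation} and \cite{kuznetsov2010derived} for the pieces stated without rederivation.
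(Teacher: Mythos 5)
The paper offers no proof of this lemma at all: it is quoted as standard, with references to Bondal and Kuznetsov, so your proposal can only be judged on its own merits. Parts (1), (2) and (4) of your argument are correct and are the standard proofs: completing the (co)unit to a triangle and identifying the cone by semiorthogonality, evaluating the triangles on objects of $\shT'$, $\shT'^\perp$, ${}^\perp\shT'$, and using exactness of the mutation functors for the mutual-inverse and reassembly statements.

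The genuine gap is in part (3), and it is not the benign ``bookkeeping'' you defer to \cite{bondal1989representation}: the functor identity you are trying to establish by Yoneda is false in the stated generality, so no argument can close it. Concretely, take $\shT=D^b(\PP^2)$, $\shT'=\langle\shO\rangle$, $b=\shO(1)\in{}^\perp\shT'$. Then
\begin{equation*}
\LL_{\shT'}(b)=\Cone\bigl(\RR Hom(\shO,\shO(1))\otimes\shO\xrightarrow{ev}\shO(1)\bigr)=\Omega^1_{\PP^2}(1)[1],
\qquad
S_{\shT}(b)=\shO(1)\otimes\shO(-3)[2]=\shO(-2)[2],
\end{equation*}
and these are not isomorphic (they are shifts of non-isomorphic sheaves sitting in different degrees). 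The precise point where your Yoneda computation breaks is this: writing $t_{\shT'^\perp}=i(c)$ for the $\shT'^\perp$-component of $t$, one has $\Hom(t,S_\shT b)\cong\Hom(t_{\shT'^\perp},S_\shT b)=\Hom(b,t_{\shT'^\perp})^{\vee}$, but computing the other side forces you through Serre duality \emph{inside} $\shT'^\perp$:
\begin{equation*}
\Hom(i(c),\LL_{\shT'}b)=\Hom_{\shT'^\perp}(c,i^*b)=\Hom_{\shT'^\perp}(i^*b,S_{\shT'^\perp}c)^{\vee}=\Hom_{\shT}(b,i(S_{\shT'^\perp}c))^{\vee},
\end{equation*}
which is $\Hom(S_{\shT'^\perp}(t_{\shT'^\perp}),S_\shT b)$, not $\Hom(t_{\shT'^\perp},S_\shT b)$. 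The discrepancy is exactly the Serre functor of the complement, and the correct statement is $\LL_{\shT'}|_{{}^\perp\shT'}\cong S_{\shT'^\perp}^{-1}\circ S_{\shT}|_{{}^\perp\shT'}$ (this specializes to the printed equality only when $S_{\shT'^\perp}$ is a shift, e.g.\ when $\shT'^\perp$ is generated by a single exceptional object). In other words, part (3) as printed in the paper is itself imprecise; what is true, what the cited references prove, and what the paper actually uses in the text (e.g.\ to identify the mutated block with $-\otimes K_X[\dim X]$ of the old one) is the equality of \emph{subcategories} $\LL_{\shT'}({}^\perp\shT')=S_{\shT}({}^\perp\shT')=\shT'^\perp$, which already follows from your first structural observation ($S_\shT$ exchanges the orthogonals) together with part (2). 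If you replace your Yoneda step by that subcategory-level claim, everything that is needed downstream is proved; as written, your part (3) asserts a false natural isomorphism.
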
\label{defmut}
In particular, if $\shT'$ is generated by only one object $E$, then for $b\in \shT$, 
\begin{align} 
& \LL_E(b)=Cone(\RR Hom(E, b)\otimes E\xrightarrow{ev} b)\label{lmut}\\
&\RR_E(b)=Cone (b\xrightarrow{ev^{\vee}} \RR Hom(b, E)^{\vee}\otimes E^{\vee})[-1].
\end{align}

\subsection{Borel-Weil-Bott Theorem}
We will use the following special case of the Borel-Weil-Bott (BWB) theorem repeatedly. Recall that for any non-increasing sequence of integers $(a_1, a_2)$, one can get an associated $Schur$ $functor$ $\Sigma^{a_1, a_2}$. The readers can refer to section 2 of \cite{kuznetsov2008exceptional} for the general statement of BWB and relevant background. 
\begin{theorem}[Special case of BWB]\label{BWB}
For any integers $a_1\geq a_2$, 
\[
H^{\bullet}(Gr(2,N), \Sigma^{a_1, a_2} U^{\vee})=0 
\]
if $1-N\leq a_1\leq -2$ or $2-N\leq a_2\leq -1$. 
\end{theorem}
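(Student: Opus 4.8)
The plan is to deduce this directly from the general Borel--Weil--Bott theorem on $Gr(2,N)$, reducing the statement to the ``repeated entry'' vanishing criterion. Recall that for a bundle of the form $\Sigma^{a_1,a_2}U^\vee$ (so that the quotient bundle contributes the trivial partition), BWB instructs us to form the length-$N$ sequence obtained by concatenating the weight of $U^\vee$ with $N-2$ zeros, add the Weyl vector $\rho=(N-1,N-2,\ldots,1,0)$, and test the result for repeated entries: if two entries coincide, all cohomology vanishes; otherwise the cohomology is concentrated in a single degree equal to the number of inversions needed to sort the sequence into strictly decreasing order. So the first step is simply to write this sequence down explicitly, using the general statement recalled in \cite{kuznetsov2008exceptional} (which I may invoke, since it is what Theorem \ref{BWB} specializes).

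Concretely, I would set $\gamma=(a_1,a_2,0,\ldots,0)$ with $N-2$ trailing zeros and compute
$$\gamma+\rho=\big(a_1+N-1,\;a_2+N-2,\;N-3,\;N-4,\;\ldots,\;1,\;0\big).$$
The key observation is that the ``tail'' from the third entry onward is the fixed strictly decreasing list $N-3,N-4,\ldots,1,0$, i.e.\ exactly the set $\{0,1,\ldots,N-3\}$, independent of $a_1$ and $a_2$. Hence the only way to create a repeated entry in $\gamma+\rho$ is for one of the two leading entries $a_1+N-1$ or $a_2+N-2$ to collide with the tail (or with each other), and such a repeat is precisely what forces the cohomology to vanish.

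Next I would translate the two hypotheses into collisions. The inequality $1-N\le a_1\le -2$ is equivalent to $0\le a_1+N-1\le N-3$, i.e.\ $a_1+N-1\in\{0,\ldots,N-3\}$, so the first entry duplicates a tail entry and $\gamma+\rho$ has a repeat. Likewise $2-N\le a_2\le -1$ is equivalent to $0\le a_2+N-2\le N-3$, placing the second entry in the tail. In either case BWB gives $H^\bullet(Gr(2,N),\Sigma^{a_1,a_2}U^\vee)=0$, as claimed. The remaining conceivable collision $a_1+N-1=a_2+N-2$ would require $a_1=a_2-1$, which is excluded by the standing hypothesis $a_1\ge a_2$, so it never arises.

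There is no deep obstacle here: once the weight bookkeeping is set up, the vanishing is immediate from the repeated-entry criterion, and the only real subtlety is pinning down conventions --- which $\rho$ to add, dominant versus antidominant normalization, and the orientation of the Schur functor on $U^\vee$ rather than $U$. I would fix these by a sanity check against a known value, e.g.\ $(a_1,a_2)=(1,0)$: the recipe then yields $\gamma+\rho=(N,N-2,N-3,\ldots,0)$, which is repeat-free and already decreasing, giving cohomology in degree zero of dimension $N$. This matches $H^0(Gr(2,N),U^\vee)=(\CC^N)^\vee$ together with the vanishing of all higher cohomology, which confirms the normalization and hence the computation above.
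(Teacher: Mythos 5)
Your proposal is correct and follows essentially the same route as the paper: the paper states this result without proof, as a direct specialization of the general Borel--Weil--Bott theorem cited from Section 2 of \cite{kuznetsov2008exceptional}, and your computation of $\gamma+\rho=(a_1+N-1,\,a_2+N-2,\,N-3,\,\ldots,\,1,\,0)$ together with the repeated-entry vanishing criterion is exactly the bookkeeping the paper leaves implicit (the hypotheses $1-N\leq a_1\leq -2$ and $2-N\leq a_2\leq -1$ being precisely the conditions that a leading entry collides with the tail $\{0,\ldots,N-3\}$). Your sanity check against $H^{\bullet}(Gr(2,N),U^{\vee})$ correctly pins down the weight conventions, so the argument is complete.
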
 
Also, we will use the following projection formula frequently. 
\begin{lemma}\label{pfp}
For any integer $a$,
\begin{align*}
\RR p_{2*}\shO(ah)=\begin{cases}  S^a U^{\vee}  \quad &\text{if} \,\,\,a\geq 0,\\
0 \quad &\text{if} \,\,\, a=-1,\\
  S^{-a-2}U\otimes \shO(-H)[-1] \quad &\text{if} \,\,\,  a\leq -2.\\
 \end{cases}
\end{align*}
\end{lemma}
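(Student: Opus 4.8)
The plan is to reduce the statement to the cohomology of line bundles on the $\PP^1$-fibers of $p_2\colon E=\PP_{Gr(2,N)}(U)\to Gr(2,N)$, together with relative Serre duality for the negative range of $a$. For $a\geq 0$, recall that under the convention $\PP(U)=\Proj(\Sym^{\bullet}U^{\vee})$ the class $\shO(h)$ is the relative hyperplane bundle, so the defining property of relative $\Proj$ gives $p_{2*}\shO(ah)=S^aU^{\vee}$; since fiberwise $H^{>0}(\PP^1,\shO(a))=0$ for $a\geq 0$, cohomology and base change forces the higher direct images to vanish, so $\RR p_{2*}\shO(ah)=S^aU^{\vee}$ is concentrated in degree $0$. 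For $a=-1$, one has $H^{\bullet}(\PP^1,\shO(-1))=0$ on every fiber, hence $\RR p_{2*}\shO(-h)=0$.

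The case $a\leq -2$ carries the real content, and I would treat it by relative Serre duality for the relative-dimension-one morphism $p_2$. From the relative Euler sequence $0\to\shO(H-h)\to p_2^*U^{\vee}\to\shO(h)\to 0$ one reads off $\det p_2^*U^{\vee}=\shO(H)$ (consistent with $\det U^{\vee}=\shO_{Gr}(H)$), and, computing the relative tangent bundle of the $\PP^1$-bundle as the Hom from the tautological sub to the tautological quotient, $T_{E/Gr}=\sHom(\shO(H-h),\shO(h))=\shO(2h-H)$; hence $\omega_{E/Gr}=\shO(H-2h)$. Relative Serre duality then gives
\[
\RR p_{2*}\shO(ah)\cong\big(\RR p_{2*}(\shO(-ah)\otimes\omega_{E/Gr})[1]\big)^{\vee}.
\]
Since $\shO(-ah)\otimes\omega_{E/Gr}=\shO((-a-2)h)\otimes\shO(H)$ with $-a-2\geq 0$, the projection formula and the case already settled reduce the inner pushforward to $S^{-a-2}U^{\vee}\otimes\shO_{Gr}(H)$ in degree $0$; dualizing and shifting yields $S^{-a-2}U\otimes\shO(-H)[-1]$, as claimed.

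The only point demanding genuine care — rather than a true obstacle — is the bookkeeping of conventions: fixing $\det U^{\vee}=\shO_{Gr}(H)$, deriving the precise relative canonical $\omega_{E/Gr}=\shO(H-2h)$, and tracking the dual-and-shift in the Serre duality isomorphism so that the result lands in cohomological degree $+1$ (i.e. with the shift $[-1]$) rather than degree $0$.
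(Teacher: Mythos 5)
Your proof is correct and takes essentially the same route as the paper: the paper also treats only the case $a\le -2$ as needing an argument, and handles it by Grothendieck--Verdier duality for $p_2$ with $p_2^!\shO=\shO(H-2h)[1]$ (i.e.\ relative Serre duality with $\omega_{E/Gr}=\shO(H-2h)$), reducing to the $a\ge 0$ case via the projection formula and then dualizing with a shift. The only difference is cosmetic --- the paper computes the dual $\sHom(\RR p_{2*}\shO(ah),\shO)$ and you compute $\RR p_{2*}\shO(ah)$ directly as a dual --- plus your explicit derivation of $\omega_{E/Gr}$ from the Euler sequence, which the paper uses without comment.
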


\begin{proof}
The first and second lines are easy. For the third line, 
\begin{align*}
&\mathcal{H}om (\RR p_{2*}\shO(ah),\shO)=\RR p_{2*} \mathcal{H}om(\shO(ah), p_2^!\shO)=\RR p_{2*} \mathcal{H}om(\shO(ah), \shO(H-2h)[1])\\
&=\RR p_{2*}\mathcal{H}om(\shO, \shO(H+(-a-2)h)[1])=\shO(H)\otimes S^{-a-2}U^*[1].
\end{align*}
\end{proof}

\subsection{Proof of Lemma \ref{van}}\label{proofofvan}
\begin{proof}[Proof of Lemma \ref{van} (\ref{2ind}):]
Follow the proof of (1), and it is sufficient to show for $k+2\leq a\leq n-1$ and $0\leq b\leq k$, 
\begin{align*}
&\RR Hom_E(S^aU^{\vee}, \shO((k+1)h))=\Ext^{\bullet}_{Gr(2, N)}( S^aU^{\vee}, S^{k+1}U^{\vee})=0,\\
& \RR Hom_E(S^aU^{\vee}(H), \shO(kh))=\Ext^{\bullet}_{Gr(2,N)}( S^aU^{\vee}(H), S^{k}U^{\vee})=0,\\
& \RR Hom_E(S^bU^{\vee}(H), \shO((k+1)h))=\Ext^{\bullet}_{Gr(2,N)}(S^bU^{\vee}(H), S^{k+1}U^{\vee})=0,\\
&\RR Hom_E(S^bU^{\vee}(2H), \shO(kh))=\Ext^{\bullet}_{Gr(2,N)}(S^bU^{\vee}(2H), S^{k}U^{\vee})=0.
\end{align*}
These all hold by the SOD of $D(Gr(2, N))$. 
\end{proof}

\begin{proof}[Proof of Lemma \ref{van} (\ref{3ind}):]
It is sufficient to show that
\begin{align*}
& \Ext^{\bullet} _E(\shO(\ell H), S^{k-1}U^{\vee}(H-h))=\Ext^{\bullet-1}_{Gr(2, N)}(S^{k-1}U(-H), S^{l-1}U(-H))=0;\\
& \Ext^{\bullet} _E(\shO(\ell H), S^{k-1}U^{\vee}(-2h))=\Ext^{\bullet-1}_{Gr(2, N)}(S^{k-1}U, S^lU)=0;\\
& \Ext^{\bullet} _E(S^l U^{\vee}(H-2h), S^{k-1}U^{\vee}(H-h))=\Ext^{\bullet}_{Gr(2, N)}(S^l U^{\vee}\otimes S^{k-1}U, p_{2*}\shO(-h))=0;\\
& \Ext^{\bullet} _E(S^l U^{\vee}(2H-h), S^{k-1}U^{\vee}(H-h))=\Ext^{\bullet}_{Gr(2, N)}(S^l U^{\vee}(H), S^{k-1}U^{\vee})=0.
\end{align*}
The third line uses the projection formula (Lemma \ref{pfp}), and the rest uses the SOD of $D(Gr(2, N))$. 
\end{proof}

\begin{proof}[Proof of Lemma \ref{van} (\ref{4ind}):]
It is sufficient to show for $0\leq a\leq n-k-2$ and $0\leq b\leq n-k-3$, 
\begin{align*}
& \RR Hom_E(S^{n-2-k} U^{\vee}(H-h), \shO(\ell H))=\Ext^{\bullet}_{Gr(2, N)}(S^{n-2-k} U^{\vee}(H), S^{l+1}U^{\vee})=0;\\
& \RR Hom_E(S^{n-2-k} U^{\vee}(2H), \shO(\ell H))=\Ext^{\bullet}_{Gr(2, N)}(S^{n-2-k} U^{\vee}(2H), S^{l}U^{\vee})=0;\\
&\RR Hom_E(S^{n-k} U^{\vee}(H-h), S^aU^{\vee}(H))=\Ext^{\bullet}_{Gr(2, N)}(S^{n-k} U^{\vee}(H), S^a U^{\vee}\otimes U^{\vee})=0;\\
&\RR Hom_E(S^{n-k} U^{\vee}(2H), S^aU^{\vee}(H))=\Ext^{\bullet}_{Gr(2, N)}(S^{n-k} U^{\vee}(H), S^a U^{\vee})=0;\\
&\RR Hom_E(\shO((n-k)(H-h)-h, S^bU^{\vee}(H))=\Ext^{\bullet}_{Gr(2, N)}(\shO((n-1-k)H), S^bU^{\vee}\otimes S^{n-k+1}U^{\vee})=0;\\
&\RR Hom_E(\shO((n-k)(H-h), S^bU^{\vee})=\Ext^{\bullet}_{Gr(2, N)}(\shO((n-k)H), S^bU^{\vee}\otimes S^{n-k}U^{\vee})=0.
\end{align*}
All these can be shown by theorem \ref{BWB} and the SOD of $D(Gr(2, N))$. 
\end{proof}

\begin{proof}[Proof of Lemma \ref{van} (\ref{step3}):]
It is sufficient to show for $n-2\ell-1\leq a\leq n-1$ and $0\leq b\leq n-2k-2$,  
\begin{align*}
& \Ext^{\bullet}_E(S^aU^{\vee}((\ell-k)H), S^bU^{\vee})=H^{\bullet}(Gr(2,N), S^aU\otimes S^bU^{\vee}((k-l)H))=0;\\
& \Ext^{\bullet}_E(S^aU^{\vee}((\ell-k+1)H), S^bU^{\vee}(-h))=0.
\end{align*}
The second line uses the projection formula (Lemma \ref{pfp}).  For the first line, by Littlewood-Richardson rule, $$S^aU\otimes S^bU^{\vee}((k-\ell)H=\bigoplus_{t=0}^b \Sigma^{b-t+k-\ell, -a+t+k-\ell}U^{\vee}.$$
It is easy to check that $-a+t+k-\ell\in [2-N, -1]$ whenever $0\leq t\leq b, n-2\ell-1\leq a\leq n-1, 0\leq b\leq n-2k-2, 0\leq \ell<k \leq r$. 
\end{proof}

\begin{proof}[Proof of Lemma \ref{van} (\ref{ab})]
It is sufficient to show that
\begin{align*}
& \Ext^{\bullet}_E(\shO(ah), \shO(bH))=H^{\bullet-1}(Gr(2,N), \Sigma^{b-1, b-a+1}U^{\vee})=0;\\
& \Ext^{\bullet}_E(\shO((a+1)h), \shO((b-1)H)=H^{\bullet-1}(Gr(2,N),\Sigma^{b-2, b-a-1}U^{\vee})=0.
\end{align*}

These hold whenever condition (i) or (ii) by theorem \ref{BWB}. 
\end{proof}

\subsection{Proof of Lemma \ref{mut}}\label{proofofmut}
\begin{proof}[Proof of Lemma \ref{mut}:] We will give the proof of (1) only. The others are very similar. We need to evaluate $$\Ext^{\bullet}(S^{k-1}U^{\vee}(H-h), S^{k}U^{\vee}).$$  For that, we need to compute
(i) $\Ext_E^{\bullet}(S^{k-1}U^{\vee}(2H), S^{k}U^{\vee})$ and (ii) $\Ext_E^{\bullet}(S^{k-1}U^{\vee}(H-h), S^{k}U^{\vee})$ by the distinguished triangle (\ref{att}).

Actually, by the SOD of $D(Gr(2,N))$
\begin{enumerate}\item[(i)] $\Ext_E^{\bullet}(S^{k-1}U^{\vee}(2H), S^{k}U^{\vee})=\Ext_{Gr(2,N)}^{\bullet}(S^{k-1}U^{\vee}(2H), S^{k}U^{\vee})=0.$ 
\item[(ii)]  $\Ext_E^{\bullet}(S^{k-1}U^{\vee}(H-h), S^{k}U^{\vee})=\Ext_{Gr(2, N)}^{\bullet}(S^{k-1}U^{\vee}(H), S^kU^{\vee}\otimes U^{\vee})=\\
\Ext_{Gr(2, N)}^{\bullet}(S^{k-1}U^{\vee}(H), S^{k+1}U^{\vee}\oplus S^{k-1}U^{\vee}(H))=\CC[0]$.
\end{enumerate}

By  (\ref{lmut}),
\begin{align*}
& \LL_{S^{k-1}U^{\vee}((k-1)(H-h))}S^kU^{\vee}\\
&=Cone(\RR Hom(S^{k-1}U^{\vee}((k-1)(H-h)), S^kU^{\vee})\otimes S^{k-1}U^{\vee}((k-1)(H-h))\longrightarrow S^kU^{\vee})\\
& =Cone(S^{k-1}U^{\vee}((k-1)(H-h))\longrightarrow S^kU^{\vee}) =\shO(kh).
 \end{align*}
 
 The last equality uses the short exact sequence (\ref{euler}).

\end{proof}

\end{document}